\numberwithin{equation}{section}
\newtheorem{theorem}{Theorem}[section]
\newtheorem{corollary}[theorem]{Corollary}
\newtheorem{lemma}[theorem]{Lemma}
\newtheorem{definition}[theorem]{Definition}
\newtheorem{remark}[theorem]{\bf{Remark}}
\newtheorem{example}[theorem]{\bf{Example}}
\begin{document}

\title{Efficient iterative techniques for solving tensor problems with the T-product}

\thanks{{\scriptsize
$^\ast$Corresponding author}}
\maketitle

\begin{center}
	
	\textbf{\textbf{Malihe Nobakht Kooshkghazi}}$^{\S}$,\,\,\textbf{\textbf{Salman Ahmadi-Asl}}$^{\ast,\dagger}$,\,\,\textbf{\textbf{Hamidreza Afshin}}$^\S$ \\ [0.2cm]
	$^\S${\small \textit{Department of Mathematics, Vali-e-Asr University of Rafsanjan,\\Rafsanjan, Iran}}\\
	$^\dagger${\small \textit{Lab of Machine Learning and Knowledge Representation, \\ Innopolis University, 420500, Innopolis, Russia.}}\\
	\texttt{e-mails: m.nobakht88@gmail.com,\,s.ahmadiasl@innopolis.ru,\,afshin@vru.ac.ir}
\end{center}


\begin{abstract}
This paper presents iterative methods for solving tensor equations involving the T-product. The proposed approaches apply tensor computations without matrix construction. For each initial tensor, these algorithms solve related problems in a finite number of iterations, with negligible errors. The theoretical analysis is validated by numerical examples that demonste the practicality and effectiveness of these algorithms.


 \bigskip
\noindent \textit{Keywords}: Iterative algorithm, Orthogonal tensor sequence, Tensors, T-product, Tensor equation.\\
\noindent \textit{2010 AMS Subject Classification}: 15A69, 65F10.
\end{abstract}

\pagestyle{myheadings}
\markboth{\rightline {\scriptsize   
}}
         {\leftline{\scriptsize    }}

\bigskip


\section{Introduction}
A tensor $ \mathscr{C}=(c_{i_{1}\dots i_{d}})_{n_{1}\times \dots \times n_{d}} $ is a multidimensional array with entries $ c_{i_{1} \dots i_{d}} \in \Bbb F$, where $ \Bbb F $ is a field and each index $  i_{j} $ ranges over $1,\dots,n_{j} $ for $ j=1,\dots,d $. The order of $ \mathscr{C} $ is $ d $, and its dimension is the tuple $ (n_{1},\dots,n_{d}) $. We denote this by $ \mathscr{C} \in \Bbb F^{n_{1}\times n_{2}\times \dots \times n_{d}} $. A tensor $  \mathscr{C} $ is called square of order $ d $ and dimension $  n$, If $ n_{1}=n_{2}=\cdots=n_{d}=n $. For a more detailed on tensors, you can see \cite{ten}.

Tensors are applicable in modern problems such as completion \cite{com,Mensah,Asl}, principal component analysis \cite{ana}, Hammerstein identification \cite{Elden}, and image processing \cite{kim2,Ahmadi}. The $n$-mode product has caused many developments on multidimensional data. The CP and Tucker decompositions extend the classical singular value decomposition (SVD) of matrices to tensors \cite{svd}. In the last decade, various product operations have been introduced for two tensors. These include the Einstein product, T-product, C-product, and L-product. Some algorithms utilized for solving matrix equations were generalized to tensor equations using these products \cite{fp1}, \cite{gui2}, \cite{fp}, \cite{sun1}.  

In \cite{kim3} and \cite{kim2}, Kilmer's research group proposed the T-product as a extending matrix multiplication to third-order tensors. The T-product has applications in signal processing, computer vision, image processing and low-rank tensor approximation, for these applications see\cite{sem}, \cite{zhang}, \cite{machine}, \cite{8}, \cite{7}, \cite{9} and \cite{10}. We know that, tensor equations including the tensor-tensor product has applications in the real world. Hence, providing methods for solving these equations has always been of interest to researchears, you can see \cite{nobakht3} and references therein. 
For the Einstein product, iterative methodes to solve some tensor equations were established in \cite{wen}. Here, we extend these methods to solve equations involving tensor-tensor product.


We structure the remainder as follows: In Section $ 2 $, we present some concepts and notations. Section $ 3 $, presents an algorithm for solving tensor equations of the form $ \mathscr{C} \star \mathscr{X}=\mathscr{D} $, in which tensor $ \mathscr{C} \in \Bbb R^{n \times n \times n_{3}} $ is a symmetric positive definite under tensor-tensot product, $ \mathscr{X} \in \Bbb R^{n \times l \times n_{3}} $ and $ \mathscr{D} \in \Bbb R^{n \times l \times n_{3}} $. In Section $ 4 $, we construct a method to find a solution for the consistent tensor equation $ \mathscr{C} \star \mathscr{X}=\mathscr{D} $, in which $ \mathscr{C} \in \Bbb R^{n_{1} \times n_{2} \times n_{3}} $, $ \mathscr{X} \in \Bbb R^{n_{2} \times l \times n_{3}} $ and $ \mathscr{D} \in \Bbb R^{n_{1} \times l \times n_{3}} $. Also, we analyze the convergence properties of these methods. In Section $ 5 $, we consider the inconsistent tensor equation $ \mathscr{C} \star \mathscr{X}=\mathscr{D} $, in which $ \mathscr{C}, \mathscr{X}  $ and $ \mathscr{D}$ are real tensors with adequate size. Moreover, we present an iterative algorithm for solving the optimization problem $ \min\limits_{\mathscr{X}} \parallel \mathscr{C} \star \mathscr{X}-\mathscr{D} \parallel$. The applicability and efficiency of the proposed approaches are studied in Section $ 6 $. A brief study concludes the paper in Section $7$. 
 
The notations $\mathscr{C}(i, :, :), \mathscr{C}(:, i, :)$ and $\mathscr{C}(:, :, i) $ indicate the $i$th horizontal, lateral and frontal slices of $\mathscr{C}$, respectively. It is common to show the frontal slice $\mathscr{C}(:, :, i)$ by $\mathscr{C}^{(i)}$. The tube is shown by $\mathscr{C}(i, j, :)$. Finally, $ \mathscr{O} $ indicates the zero tensor.

\section{Preliminaries}\label{sec:6}

\noindent 
This section provides necessary definitions and summarizes essential T-product properties.

\begin{definition}[\cite{lu}]
The discrete Fourier transform (DFT) matrix $ F_{n} $ has the following form
\begin{equation*}
F_{n}=\begin{bmatrix}
1 & 1 & 1& \ldots &1 \\
1 & \vartheta & \vartheta ^{2}& \ldots & \vartheta^{(n-1)} \\
\vdots & \vdots & \vdots &\ddots & \vdots\\
1& \vartheta^{n-1} &\vartheta^{2(n-1})& \ldots & \vartheta^{(n-1)(n-1)}\\
\end{bmatrix}\in \mathbb{C}^{n\times n},
\end{equation*}
where $\vartheta=e^{-\frac{2\pi i}{n}}$. 
Notice that $\dfrac{F_{n}}{\sqrt{n}}$ is an orthogonal matrix, 
\[
F_{n}^{\ast} F_{n}=F_{n} F_{n}^{\ast}=nI_{n}.
\]
Thus, $ F_{n}^{-1}=\dfrac{F_{n}^{\ast}}{n} $. For a tensor $\mathscr{C} \in \mathbb{R}^{n_{1}\times n_{2} \times n_{3}}$, we define $\overline{\mathscr{C}} \in \mathbb{C}^{n_{1}\times n_{2} \times n_{3}}$ as the tensor obtained by applying the DFT along the third dimension to all tubes of $\mathscr{C}$. By using the Matlab command $\mathtt{fft}$ we obtain
\[
 \overline{\mathscr{C}} = \mathtt{fft}(\mathscr{C}, [ ], 3).
\]
The original tensor $ \mathscr{C} $ can be recovered from $  \overline{\mathscr{C}} $ using the inverse $\mathtt{fft}$:
\[
\mathscr{C} = \mathtt{ifft}(\overline{\mathscr{C}}, [ ], 3).
\]
The block diagonal matrix  $\overline{C} \in \mathbb{C}^{n_{1}n_{3}\times n_{2}n_{3}}$ is defined as
\begin{equation*}
\overline{C} = \mathtt{bdiag}(\overline{\mathscr{C}})=\begin{bmatrix}
\overline{C}^{(1)}& \\
&\overline{C}^{(2)}& \\
 &  &\ddots & \\
&  &&  &\overline{C}^{(n_{3})}\\
\end{bmatrix},
\end{equation*}
in which $ \overline{C}^{(i)}$ is $i\mathrm{th}$ frontal slice of $\overline{\mathscr{C}}$. The block circulant matrix $\mathtt{bcirc}(\mathscr{C}) \in \mathbb{R}^{n_{1}n_{3}\times n_{2}n_{3}}$ is obtained of $ \mathscr{C} $ by
\begin{equation*}
\mathtt{bcirc}(\mathscr{C})=\begin{bmatrix}
C^{(1)}& C^{(n_{3})}&\ldots & C^{(2)}\\
C^{(2)}&C^{(1)}& \ldots & C^{(3)} \\
\vdots &\vdots  &\ddots &\vdots \\
C^{(n_{3})}& C^{(n_{3}-1)}  & \ldots &C^{(1)}\\
\end{bmatrix}.
\end{equation*}

\end{definition}

\begin{definition}[\cite{lu}]
For tensor $ \mathscr{C} $ of order $3$ and dimension $n_{1}\times n_{2}\times n_{3}$, the operations $\mathtt{unfold}$ and $\mathtt{fold}$ are defined as
\begin{equation*}
\mathtt{unfold}(\mathscr{C})=\begin{bmatrix}
C^{(1)}\\
C^{(2)}\\
\vdots \\
C^{(n_{3})}
\end{bmatrix},\ \mathtt{fold}(\mathtt{unfold}(\mathscr{C}))=\mathscr{C},
\end{equation*}
where $\mathtt{unfold}$ maps $ \mathscr{C} $ to a matrix of size $n_{1}n_{3} \times n_{2}$ and $\mathtt{fold}$ is its inverse operator.
\end{definition}
\begin{definition}[\cite{lu}] \label{m11}
Suppose that tensors $\mathscr{C} \in \mathbb{R}^{n_{1}\times n_{2}\times n_{3}}$ and $ \mathscr{D} \in \mathbb{R}^{n_{2}\times l \times n_{3}}$. The $T-product$ $ \mathscr{C}\star \mathscr{D} $ with dimension $ n_{1}\times l \times n_{3} $ is defined as
\[
\mathscr{C} \star \mathscr{D}=\mathtt{fold}(\mathtt{bcirc}(\mathscr{C}).\mathtt{unfold}(\mathscr{D})).
\]
Clearly, $\mathscr{E} = \mathscr{C} \star \mathscr{D}$ is equivalent to $\overline{E} =\overline{C}\,\overline{D}$. 
\end{definition}
The Algorithm \ref{algo1n}, presents T-product between tensors $ \mathscr{C} $ and $ \mathscr{D} $.
\begin{algorithm}
   \begin{algorithmic}[1]
    \STATE Input tensors $ \mathscr{C} \in \mathbb{R}^{n_{1}\times n_{2} \times n_{3}} $ and $ \mathscr{D} \in \mathbb{R}^{n_{2}\times l \times n_{3}} $.
     \STATE Compute
       \begin{eqnarray}
       \nonumber \overline{\mathscr{C}} &=& \mathtt{fft}(\mathscr{C}, [ ],3),\\
       \nonumber \overline{\mathscr{D}} &=& \mathtt{fft}(\mathscr{D}, [ ], 3).
       \end{eqnarray}
       \STATE Compute each frontal slice of $\overline{\mathscr{E}}$ by
       \begin{equation*}
       \overline{E}^{(i)} = \begin{cases}
            \overline{C}^{(i)} \overline{D}^{(i)}   & i=1,2,\dots , [\frac{n_{3}+1}{2}],\\
              conj(\overline{E}^{(n_{3}-i+2)})      & i=[\frac{n_{3}+1}{2}]+1,\dots , n_{3}.
        \end{cases}
         \end{equation*}
        \STATE Compute $ \mathscr{E}=\mathtt{ifft}(\overline{\mathscr{E}}, [ ], 3) $.
\end{algorithmic} 
	\caption{The Tensor-Tensor Product}
	\label{algo1n}
\end{algorithm} 

Some properties of the T-product are considered in the following lemma.
\begin{lemma}[\cite{jin}]
Let $\mathscr{C},\mathscr{D}$ and $\mathscr{E} $ be tensors of adequate sizes, then the following results hold.
\begin{itemize}
\item
(Left distributivity): $
\mathscr{C} \star (\mathscr{D}+\mathscr{E})=\mathscr{C} \star \mathscr{D}+\mathscr{C} \star \mathscr{E}
$.
\item
(Right distributivity):
$(\mathscr{C} + \mathscr{D})\star \mathscr{E}=\mathscr{C}\star \mathscr{E} + \mathscr{D}\star \mathscr{E}$.
\item
(Associativity): $
\mathscr{C} \star (\mathscr{D} \star \mathscr{E})=(\mathscr{C} \star \mathscr{D}) \star \mathscr{E}$.
\end{itemize}
\end{lemma}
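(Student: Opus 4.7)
The plan is to leverage the Fourier-domain characterization given at the end of Definition \ref{m11}, namely that $\mathscr{E}=\mathscr{C}\star\mathscr{D}$ is equivalent to $\overline{E}=\overline{C}\,\overline{D}$, where $\overline{C}=\mathtt{bdiag}(\overline{\mathscr{C}})$ is an honest complex block-diagonal matrix. This reduces every identity about $\star$ to the corresponding identity for ordinary matrix multiplication of block-diagonal matrices, which in turn reduces to the usual matrix arithmetic on each frontal slice $\overline{C}^{(i)}$.

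First, I would observe that the DFT along the third mode is a linear bijection on the space of third-order tensors, so in particular $\overline{\mathscr{C}+\mathscr{D}}=\overline{\mathscr{C}}+\overline{\mathscr{D}}$. Passing to block-diagonal form preserves this, giving $\overline{C+D}=\overline{C}+\overline{D}$ (and similarly for $\overline{D+E}$). To prove left distributivity, I would compute
\begin{equation*}
\overline{\mathscr{C}\star(\mathscr{D}+\mathscr{E})}
=\overline{C}\bigl(\overline{D}+\overline{E}\bigr)
=\overline{C}\,\overline{D}+\overline{C}\,\overline{E}
=\overline{\mathscr{C}\star\mathscr{D}}+\overline{\mathscr{C}\star\mathscr{E}},
\end{equation*}
using ordinary matrix distributivity in the middle, and then invoke injectivity of the DFT (i.e., applying $\mathtt{ifft}(\cdot,[\,],3)$ to both sides) to conclude. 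Right distributivity is verbatim the same argument with the factor order swapped.

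For associativity I would argue analogously: two applications of the equivalence give
\begin{equation*}
\overline{\mathscr{C}\star(\mathscr{D}\star\mathscr{E})}
=\overline{C}\,\overline{\mathscr{D}\star\mathscr{E}}
=\overline{C}\bigl(\overline{D}\,\overline{E}\bigr)
=\bigl(\overline{C}\,\overline{D}\bigr)\overline{E}
=\overline{\mathscr{C}\star\mathscr{D}}\cdot\overline{E}
=\overline{(\mathscr{C}\star\mathscr{D})\star\mathscr{E}},
\end{equation*}
so applying $\mathtt{ifft}$ and unfolding the block-diagonal structure finishes the proof. I should briefly justify that the intermediate products make sense dimensionally: if the sizes of $\mathscr{C},\mathscr{D},\mathscr{E}$ are compatible for $\star$, then after moving to the Fourier domain each slice $\overline{C}^{(i)}\overline{D}^{(i)}\overline{E}^{(i)}$ is a well-defined matrix product, which is exactly the content of ``adequate sizes'' in the statement.

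I do not anticipate a real obstacle here; the only subtlety is to recognize that the nontrivial identity $\mathtt{bcirc}(\mathscr{C}\star\mathscr{D})=\mathtt{bcirc}(\mathscr{C})\,\mathtt{bcirc}(\mathscr{D})$, which would be tedious to verify directly from the circulant definition, is bypassed entirely once one works in the Fourier domain where block-circulants become block-diagonals. So the hardest step is really just being careful with the bookkeeping that $\overline{\,\cdot\,}$ distributes over tensor addition and that the $\mathtt{bdiag}$ map converts the T-product into honest matrix multiplication, after which all three identities are immediate consequences of the associative and distributive laws for $\mathbb{C}^{n_1 n_3 \times n_2 n_3}$.
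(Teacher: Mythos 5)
Your argument is correct, but note that the paper itself offers no proof of this lemma: it is quoted from the reference \cite{jin} and used as a known property of the T-product, so there is no in-paper proof to compare against. Your Fourier-domain route is the standard one and is sound: the map $\mathscr{C}\mapsto\overline{C}=\mathtt{bdiag}(\mathtt{fft}(\mathscr{C},[\,],3))$ is linear and injective and, by Definition \ref{m11}, converts $\star$ into ordinary multiplication of block-diagonal matrices, so all three identities pull back from the associative and distributive laws of matrix algebra. Two small points of care. First, keep the tensor-overline $\overline{\mathscr{C}}$ and the matrix-overline $\overline{C}$ distinct: the equivalence in Definition \ref{m11} is stated for the $\mathtt{bdiag}$ matrices, so your displayed chains should be read as identities between block-diagonal matrices throughout, with injectivity of $\mathtt{bdiag}\circ\mathtt{fft}$ (not merely of $\mathtt{ifft}$) invoked at the end. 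Second, an even more elementary route is available for the two distributivity claims alone: since $\mathscr{C}\star\mathscr{D}=\mathtt{fold}(\mathtt{bcirc}(\mathscr{C})\,\mathtt{unfold}(\mathscr{D}))$ and $\mathtt{bcirc}$, $\mathtt{unfold}$, $\mathtt{fold}$ are all linear, distributivity is immediate in the spatial domain without any Fourier transform; only associativity genuinely benefits from passing to the Fourier domain (or, equivalently, from the multiplicativity of $\mathtt{bcirc}$), exactly as you observe.
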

\begin{definition}[\cite{chang}]
Let $ \mathscr{C} $ be a tensor of size $ n_{1}\times n_{2}\times n_{3} $, then $ \mathscr{C}^{T} $ is defined as a tensor of order $3$ and dimension $ n_{2}\times n_{1}\times n_{3} $ obtained by transposing each of the frontal slices and then reversing the order of transposed frontal slices $2$ through $n_{3}$. 
Also, we have
\[
(\mathscr{C} \star \mathscr{D})^{T}=\mathscr{D}^{T}\star \mathscr{C}^{T}.
\]
\end{definition}
\begin{definition}[\cite{chang}]
The trace of a tensor $\mathscr{C} = (c_{ijk}) \in \mathbb{C}^{m\times m\times n}$ under tensor-tensor product is defined as
\[
Tr(\mathscr{C})=\sum\limits_{i=1}^{m}\sum\limits_{k=1}^{n} c_{iik}.
\]
\end{definition}
The following equalities hold for arbitrary tensors $\mathscr{C}, \mathscr{D} \in \mathbb{C}^{ m\times m\times n}$ and constant numbers $c,d$.
\begin{itemize}
\item
$Tr(c \mathscr{C}+d \mathscr{D})= c Tr(\mathscr{C})+d Tr(\mathscr{D}) $.
\item
$Tr(\mathscr{C})=Tr(\mathscr{C}^{T})$.
\item
$ Tr(\mathscr{C}\star \mathscr{D})=Tr(\mathscr{D}\star \mathscr{C})$.
\end{itemize}
\begin{definition}[\cite{gui}]
The identity tensor $\mathscr{I}_{n_{1}n_{1}n_{3}}$ has the matrix $I_{n_{1} n_{1}}$ as its first frontal slice and zeros elsewhere.
\end{definition}
Clearly, $  \mathscr{C}\star  \mathscr{I}=\mathscr{I}\star  \mathscr{C} =\mathscr{C}$, given the appropriate dimensions. 
\begin{definition}[\cite{gui}]
A tensor $\mathscr{C}$ of dimension  $n_{1} \times n_{1} \times  n_{3}$ is invertible if there exists a tensor $\mathscr{D}$ of dimension $n_{1}\times n_{1} \times n_{3}$ such that
\[
\mathscr{C}\star \mathscr{D}=\mathscr{D}\star \mathscr{C}=\mathscr{I}.
\]
In this context, we take $\mathscr{D} = \mathscr{C}^{-1}$ as the tensor inverse of $ \mathscr{C} $. 
\end{definition}
\begin{definition}[\cite{gui}]
The scalar inner product of $\mathscr{C},\mathscr{D} \in \mathbb{R}^{n_{1} \times n_{2} \times  n_{3}} $ is defined as
\[
< \mathscr{C},\mathscr{D}>=\sum\limits_{k_{1}=1}^{n_{1}} \sum\limits_{k_{2}=1}^{n_{2}}\sum\limits_{k_{3}=1}^{n_{3}} c_{k_{1}k_{2}k_{3}} d_{k_{1}k_{2}k_{3}},
\]
and the Frobenius norm of $ \mathscr{C} $ is given by
\[
\parallel \mathscr{C} \parallel_{F}= \sqrt{<\mathscr{C} ,\mathscr{C} >}=\sqrt{\sum\limits_{i_{1}=1}^{n_{1}} \sum\limits_{i_{2}=1}^{n_{2}}\sum\limits_{i_{3}=1}^{n_{3}} c_{i_{1}i_{2}i_{3}}^{2}}.
\]
\end{definition}
Also, the scalar product and the associated norm can be computed by
\[
< \mathscr{C},\mathscr{D}>=\frac{1}{n_{3}}< \overline{C}, \overline{D}>, \qquad \parallel \mathscr{C} \parallel_{F}=\frac{1}{\sqrt{n_{3}}} \parallel \overline{C} \parallel_{F}.
\]

In the following remark, we compute the inner product of tensors $ \mathscr{C}$ and $ \mathscr{D} $ in terms of the trace of the first frontal slice of $ \mathscr{D}^{T}\star \mathscr{C} $.
\begin{remark}
It is clear that 
\begin{equation}\label{m2}
< \mathscr{C},\mathscr{D}>=\sum\limits_{k_{1},k_{2},k_{3}} c_{k_{1}k_{2}k_{3}} d_{k_{1}k_{2}k_{3}}= Tr((\mathscr{D}^{T} \star \mathscr{C})^{(1)}),
\end{equation}
where $ (\mathscr{D}^{T} \star \mathscr{C})^{(1)} $ is the first frontal slice of $ \mathscr{D}^{T} \star \mathscr{C} $ and $\mathscr{C}, \mathscr{D}$ are real tensors. Tensors $ \mathscr{C}$ and $\mathscr{D} $ is called orthogonal if $ < \mathscr{C},\mathscr{D}>=0 $.
\end{remark}
\begin{definition}[\cite{chang}]
A tensor $ \mathscr{C} \in \mathbb{R}^{n\times n\times n_{3}}$ is called T-symmetric if $\mathscr{C}^{T} = \mathscr{C}$.
\end{definition}
\begin{definition}[\cite{kim3}]
A T-symmetric tensor $\mathscr{C} \in \Bbb R^{n \times n \times n_{3}} $ is T-symmetric positive definite if each $ \overline{C}^{(i)} $ is Hermitian positive definite. 
\end{definition}
It is clear that $ \mathscr{C}^{T}\star \mathscr{C} $ is a T-symmetric semi-positive definite tensor.  
\begin{corollary}
Let $ \mathscr{C} \in \mathbb{R}^{n\times n\times n_{3}}$ be a T-symmetric positive definite, then $Tr[(\mathscr{X}^{T}\star \mathscr{C}\star \mathscr{X})^{(1)}]\geq 0$, for each $ \mathscr{X} \in \Bbb R^{n\times l \times n_{3}} $.
\end{corollary}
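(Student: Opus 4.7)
The strategy is to rewrite the trace as an inner product, transfer the quantity to the Fourier domain where the block-diagonal structure of $\overline{C}$ splits the computation into $n_{3}$ independent Hermitian positive definite problems, and then apply a standard matrix positive-definiteness argument to each block.

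First, I would apply the identity \eqref{m2} in the preceding remark with the roles of $\mathscr{C}$ and $\mathscr{D}$ taken by $\mathscr{C}\star\mathscr{X}$ and $\mathscr{X}$, respectively, to obtain
\[
\mathrm{Tr}\bigl[(\mathscr{X}^{T}\star \mathscr{C}\star \mathscr{X})^{(1)}\bigr]
= \langle \mathscr{C}\star \mathscr{X},\, \mathscr{X}\rangle .
\]
Using the formula $\langle \mathscr{C},\mathscr{D}\rangle = \tfrac{1}{n_{3}} \langle \overline{C},\overline{D}\rangle$ stated after the definition of the Frobenius norm, together with Definition \ref{m11} which gives $\overline{\mathscr{C}\star\mathscr{X}}=\overline{C}\,\overline{X}$ in block-diagonal form, this becomes
\[
\mathrm{Tr}\bigl[(\mathscr{X}^{T}\star \mathscr{C}\star \mathscr{X})^{(1)}\bigr]
=\frac{1}{n_{3}}\,\langle \overline{C}\,\overline{X},\, \overline{X}\rangle .
\]

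Next, since both $\overline{C}=\mathtt{bdiag}(\overline{\mathscr{C}})$ and $\overline{X}=\mathtt{bdiag}(\overline{\mathscr{X}})$ are block diagonal, the product $\overline{C}\,\overline{X}$ is block diagonal with frontal blocks $\overline{C}^{(i)}\overline{X}^{(i)}$, and the Frobenius-type pairing decouples into
\[
\langle \overline{C}\,\overline{X},\, \overline{X}\rangle
=\sum_{i=1}^{n_{3}} \mathrm{tr}\bigl((\overline{X}^{(i)})^{\ast}\, \overline{C}^{(i)}\, \overline{X}^{(i)}\bigr).
\]
By the hypothesis of T-symmetric positive definiteness, each $\overline{C}^{(i)}$ is Hermitian positive definite, so for every column $\overline{x}_{j}^{(i)}$ of $\overline{X}^{(i)}$ we have $(\overline{x}_{j}^{(i)})^{\ast}\overline{C}^{(i)}\overline{x}_{j}^{(i)}\ge 0$; summing over $j$ shows that each $\mathrm{tr}\bigl((\overline{X}^{(i)})^{\ast}\overline{C}^{(i)}\overline{X}^{(i)}\bigr)$ is a nonnegative real number. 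Dividing by $n_{3}>0$ yields the desired inequality.

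The only delicate point, and what I expect to be the main obstacle in writing it cleanly, is justifying the transition to the Fourier domain when the inner product of complex matrices is involved: one must check that the conjugate-symmetry of the DFT of a real tensor makes the Parseval-type identity $\langle \mathscr{C},\mathscr{D}\rangle = \tfrac{1}{n_{3}}\langle \overline{C},\overline{D}\rangle$ consistent with interpreting the right-hand side as $\sum_{i}\mathrm{tr}((\overline{X}^{(i)})^{\ast}\overline{C}^{(i)}\overline{X}^{(i)})$ so that the nonnegativity of each Hermitian form survives the summation. Once this bookkeeping is settled, the remaining steps are immediate.
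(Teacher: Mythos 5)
Your proposal is correct and follows essentially the same route as the paper: both convert $Tr[(\mathscr{X}^{T}\star\mathscr{C}\star\mathscr{X})^{(1)}]$ into the inner product $\langle\mathscr{C}\star\mathscr{X},\mathscr{X}\rangle$ via the trace identity, pass to the Fourier domain using $\langle\mathscr{C},\mathscr{D}\rangle=\tfrac{1}{n_{3}}\langle\overline{C},\overline{D}\rangle$, and conclude from the positive definiteness of $\overline{C}$. Your explicit block-by-block decomposition and attention to the Hermitian adjoint are just a more careful spelling-out of the paper's one-line final step $\tfrac{1}{n_{3}}Tr(\overline{X}^{T}\overline{C}\,\overline{X})\geq 0$.
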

\begin{proof}
Since $\mathscr{C}$ is T-symmetric positive definite, the matrix $ \overline{C} $ is symmetric positive definite. Since $ \overline{C} $ is symmetric positive definite, we have
\begin{eqnarray}
\nonumber Tr[(\mathscr{X}^{T} \star\mathscr{C}\star \mathscr{X})^{(1)}]&=& < \mathscr{C}\star \mathscr{X}, \mathscr{X}>\\
\nonumber &=& \frac{1}{n_{3}} < \overline{C} \overline{X}, \overline{X}> \\
\nonumber &=& \frac{1}{n_{3}} Tr(\overline{X}^{T}\overline{C} \overline{X})\geq 0.
\end{eqnarray}
\end{proof}
\begin{definition}[\cite{and}]
The vectorization of a matrix $ C $, written $ vec(C) $, reshapes $ C $ into a single column by concatenating all its columns.
\end{definition}
The important relation between the vec operator and the Kronecker product is
\[
vec(ab^{T})=b\otimes a,
\]
for arbitrary column vectors $a$ and $b$. Also, we have
\[
vec(CXD)=(D^{T} \otimes C) vec(X)
\]
in which, matrices $C$, $D$ and $X$ are of compatible dimensions.

In the following definition, we define some concepts that will be used in the other sections.
\begin{definition}
Suppose that $ \mathscr{C} \in \mathbb{R}^{n\times n\times n_{3}}$ is T-symmetric positive definite, and $\mathscr{R}_{k} , \mathscr{P}_{k} \in \mathbb{R}^{n\times n_{2} \times n_{3}}$, , where $ k=1,2,\dots$, are two non-zero tensor sequences. If $ <\mathscr{R}_{s}, \mathscr{R}_{l}>=0 $ and $  <\mathscr{C}\star \mathscr{P}_{s},\mathscr{P}_{l}>=0  $, for all $ s\neq l$ and $s,l=1,2,\dots$, then we call the tensor sequence $ \lbrace \mathscr{R}_{k} \rbrace $ is an orthogonal tensor sequence and the tensor sequence $ \lbrace \mathscr{P}_{k} \rbrace $ is an $\mathscr{C}$-orthogonal tensor sequence.
\end{definition}

\section{An algorithm for T-symmetric positive definite tensor equations}
This section presents a method to solve the equation $ \mathscr{C}\star \mathscr{X} = \mathscr{D}$,
in which $\mathscr{C} \in \mathbb{R}^{n\times n \times n_{3}}  $ is a T-symmetric positive definite tensor, $ \mathscr{X} \in \mathbb{R}^{n\times l \times n_{3}} $ and $ \mathscr{D} \in \mathbb{R}^{n\times l \times n_{3}} $.
\begin{definition}\label{m3}
Let $ f : n\times l \times n_{3} \rightarrow l\times l \times n_{3}$ be a map on tensors, and $ \mathscr{P} \in \mathbb{R}^{n \times l \times n_{3}} $ with $\parallel \mathscr{P} \parallel_{F}=1$. The derivative of the function $ F(\mathscr{X})=Tr((f(\mathscr{X}))^{(1)}) $ at $ \mathscr{X} $ along the direction $  \mathscr{P} $ is given by
\[
\frac{\partial F}{\partial \mathscr{X}}= \lim_{t \rightarrow 0^{+}} \frac{Tr[(f(\mathscr{X}+t \mathscr{P}))^{(1)}]-Tr[(f(\mathscr{X}))^{(1)}]}{t}.
\]
\end{definition}
\begin{lemma}\label{m6}
Let $\mathscr{C} \in \mathbb{R}^{n\times n \times n_{3}} $ be T-symmetric positive definite, $ \mathscr{D} \in \mathbb{R}^{n\times l \times n_{3}} $, $\mathscr{X} \in \mathbb{R}^{n\times l \times n_{3}}$ and $\mathscr{P} \in \mathbb{R}^{n\times l \times n_{3}}$ with $\parallel \mathscr{P} \parallel_{F}=1$.
If $ f(\mathscr{X})=\frac{1}{2} \mathscr{X}^{T} \star \mathscr{C}\star \mathscr{X}-\mathscr{X}^{T} \star \mathscr{D} $, then 
\[
\frac{\partial F}{\partial \mathscr{X}}=Tr\bigg[\big((\mathscr{C}\star \mathscr{X}-\mathscr{D})^{T} \star \mathscr{P} \big)^{(1)}\bigg].
\]
\end{lemma}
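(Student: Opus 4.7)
The plan is to compute the difference quotient directly from the definition and let $t\to 0^+$, doing algebraic bookkeeping with the T-product rules already proved in Section~2.

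First, expand
\[
f(\mathscr{X}+t\mathscr{P})=\tfrac{1}{2}(\mathscr{X}+t\mathscr{P})^{T}\star\mathscr{C}\star(\mathscr{X}+t\mathscr{P})-(\mathscr{X}+t\mathscr{P})^{T}\star\mathscr{D}
\]
using left and right distributivity and associativity of $\star$, together with the transposition rule $(\mathscr{A}\star\mathscr{B})^{T}=\mathscr{B}^{T}\star\mathscr{A}^{T}$. This gives a constant term $f(\mathscr{X})$, a linear-in-$t$ part consisting of the two cross terms $\tfrac{t}{2}(\mathscr{X}^{T}\star\mathscr{C}\star\mathscr{P}+\mathscr{P}^{T}\star\mathscr{C}\star\mathscr{X})$ together with $-t\,\mathscr{P}^{T}\star\mathscr{D}$, and a quadratic term $\tfrac{t^{2}}{2}\mathscr{P}^{T}\star\mathscr{C}\star\mathscr{P}$. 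Applying the first-slice trace, subtracting $F(\mathscr{X})$, dividing by $t$ and letting $t\to 0^{+}$ kills the quadratic piece and leaves
\[
\frac{\partial F}{\partial \mathscr{X}}=\tfrac{1}{2}\mathrm{Tr}\bigl[(\mathscr{X}^{T}\star\mathscr{C}\star\mathscr{P})^{(1)}\bigr]+\tfrac{1}{2}\mathrm{Tr}\bigl[(\mathscr{P}^{T}\star\mathscr{C}\star\mathscr{X})^{(1)}\bigr]-\mathrm{Tr}\bigl[(\mathscr{P}^{T}\star\mathscr{D})^{(1)}\bigr].
\]

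Next, I would combine the two cross terms using the identity $\mathrm{Tr}[(\mathscr{D}^{T}\star\mathscr{C})^{(1)}]=\langle\mathscr{C},\mathscr{D}\rangle$ from equation~(\ref{m2}) and the symmetry of the scalar inner product. Specifically, $\mathrm{Tr}[(\mathscr{X}^{T}\star\mathscr{C}\star\mathscr{P})^{(1)}]=\langle\mathscr{C}\star\mathscr{P},\mathscr{X}\rangle=\langle\mathscr{X},\mathscr{C}\star\mathscr{P}\rangle$, while $\mathrm{Tr}[(\mathscr{P}^{T}\star\mathscr{C}\star\mathscr{X})^{(1)}]=\langle\mathscr{C}\star\mathscr{X},\mathscr{P}\rangle$. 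Using T-symmetry $\mathscr{C}^{T}=\mathscr{C}$ together with $(\mathscr{C}\star\mathscr{P})^{T}=\mathscr{P}^{T}\star\mathscr{C}$, one rewrites $\langle\mathscr{X},\mathscr{C}\star\mathscr{P}\rangle=\mathrm{Tr}[((\mathscr{C}\star\mathscr{P})^{T}\star\mathscr{X})^{(1)}]=\mathrm{Tr}[(\mathscr{P}^{T}\star\mathscr{C}\star\mathscr{X})^{(1)}]$, so the two cross terms coincide and collapse to a single $\mathrm{Tr}[(\mathscr{P}^{T}\star\mathscr{C}\star\mathscr{X})^{(1)}]$.

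Finally, factoring $\mathscr{P}^{T}$ on the left by left distributivity,
\[
\frac{\partial F}{\partial\mathscr{X}}=\mathrm{Tr}\bigl[\bigl(\mathscr{P}^{T}\star(\mathscr{C}\star\mathscr{X}-\mathscr{D})\bigr)^{(1)}\bigr].
\]
To match the statement, I would then use the fact that the first frontal slice of a transposed third-order tensor equals the transpose of the first frontal slice (this is immediate from the definition of $\mathscr{A}^{T}$, which only transposes slice~1 and reverses the remaining ones), combined with $\mathrm{Tr}(M)=\mathrm{Tr}(M^{T})$ for matrices. Taking the transpose of $\mathscr{P}^{T}\star(\mathscr{C}\star\mathscr{X}-\mathscr{D})$ via $(\mathscr{A}\star\mathscr{B})^{T}=\mathscr{B}^{T}\star\mathscr{A}^{T}$ yields exactly the form $(\mathscr{C}\star\mathscr{X}-\mathscr{D})^{T}\star\mathscr{P}$.

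The main obstacle, modest as it is, lies in this last step: making sure the two candidate expressions for the derivative (one with $\mathscr{P}^{T}$ on the left, one with $\mathscr{P}$ on the right) really agree after taking trace of the first frontal slice. Everything else is bookkeeping with distributivity and associativity, and the convergence of the limit is trivial because the only nonlinear-in-$t$ contribution is the explicit quadratic $\tfrac{t^{2}}{2}\mathscr{P}^{T}\star\mathscr{C}\star\mathscr{P}$ whose first-slice trace is a bounded constant.
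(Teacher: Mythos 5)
Your proposal is correct and follows essentially the same route as the paper: expand $f(\mathscr{X}+t\mathscr{P})$ by distributivity, take the first-slice trace, let $t\to 0^{+}$ to kill the quadratic term, and then merge the two cross terms using $Tr((\mathscr{S})^{(1)})=Tr((\mathscr{S}^{T})^{(1)})$ together with the T-symmetry of $\mathscr{C}$. The only difference is cosmetic: you collapse to the form $Tr[(\mathscr{P}^{T}\star(\mathscr{C}\star\mathscr{X}-\mathscr{D}))^{(1)}]$ and transpose at the end, whereas the paper writes the result directly with $(\mathscr{C}\star\mathscr{X}-\mathscr{D})^{T}$ on the left; both rest on the same trace--transpose identity.
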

\begin{proof}
By using the known properties of the transpose of a tensor under T-product, we can write
\begin{eqnarray}
\nonumber f(\mathscr{X}+t \mathscr{P})&=& \frac{1}{2}(\mathscr{X}+t \mathscr{P})^{T}\star \mathscr{C} \star (\mathscr{X}+t \mathscr{P}) - (\mathscr{X}+t \mathscr{P})^{T}\star \mathscr{D}\\
\nonumber &=& \frac{1}{2} \mathscr{X}^{T}\star \mathscr{C}\star \mathscr{X} + \frac{1}{2}t \big(\mathscr{X}^{T}\star \mathscr{C}\star \mathscr{P} + \mathscr{P}^{T}\star \mathscr{C}\star \mathscr{X}\big)\\
\nonumber &+& \frac{1}{2} t^{2} \mathscr{P}^{T}\star \mathscr{C}\star \mathscr{P} - \mathscr{X}^{T}\star \mathscr{D} - t  \mathscr{P}^{T}\star \mathscr{D}\\
\nonumber &=& f(\mathscr{X}) + \frac{1}{2} t \big( \mathscr{X}^{T}\star \mathscr{C}\star \mathscr{P} + \mathscr{P}^{T}\star \mathscr{C}\star \mathscr{X} \big) - t \mathscr{P}^{T}\star \mathscr{D} + \frac{1}{2} t^{2} \mathscr{P}^{T}\star \mathscr{C}\star \mathscr{P}.
\end{eqnarray}
Thus,
\[
(f(\mathscr{X}+t \mathscr{P}))^{(1)}=\bigg(f(\mathscr{X}) + \frac{1}{2} t \big( \mathscr{X}^{T}\star \mathscr{C}\star \mathscr{P} + \mathscr{P}^{T}\star \mathscr{C}\star \mathscr{X} \big) - t \mathscr{P}^{T}\star \mathscr{D} + \frac{1}{2} t^{2} \mathscr{P}^{T}\star \mathscr{C}\star \mathscr{P}\bigg)^{(1)}
\]
Definition \ref{m3} allows us to conclude that
\begin{eqnarray}
\nonumber \frac{\partial F}{\partial \mathscr{X}}&=& \lim_{t \rightarrow 0^{+}} \frac{Tr[(f(\mathscr{X}+t \mathscr{P}))^{(1)}]-Tr[(f(\mathscr{X}))^{(1)}]}{t}\\
\nonumber &=& \frac{1}{2} Tr[( \mathscr{X}^{T} \star \mathscr{C}\star \mathscr{P} + \mathscr{P}^{T} \star \mathscr{C} \star \mathscr{X})^{(1)}] - Tr[(\mathscr{P}^{T}\star \mathscr{D})^{(1)}]\\
\nonumber &=& Tr \bigg[\big( (\mathscr{C} \star \mathscr{X}- \mathscr{D})^{T} \star \mathscr{P} \big)^{(1)}\bigg].
\end{eqnarray}
\end{proof}
\begin{lemma}\label{m5}
Let $ \mathscr{C} \in \mathbb{R}^{n \times n \times n_{3}} $ be T-symmetric positive definite, and $ \mathscr{D} \in \mathbb{R}^{n \times l \times n_{3}} $. Then, $ \mathscr{X} \in \mathbb{R}^{n \times l \times n_{3}} $ solves the equation $ \mathscr{C} \star \mathscr{X}=\mathscr{D} $ if and only if $ \mathscr{X} $ solves  the problem
\begin{equation}\label{m4}
\min_{\mathscr{Y}} Tr ((f(\mathscr{Y}))^{1}),
\end{equation}
where $ f(\mathscr{Y}) = \frac{1}{2} \mathscr{Y}^{T}\star \mathscr{C} \star \mathscr{Y}- \mathscr{Y}^{T} \star \mathscr{D}$ and $(f(\mathscr{Y}))^{1}$ is the first frontal slice of $ f(\mathscr{Y}) $.
\end{lemma}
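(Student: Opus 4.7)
The plan is to prove the two directions separately, leaning on Lemma \ref{m6} and the positive definiteness of $\mathscr{C}$ via the preceding corollary.

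For the forward direction, suppose $\mathscr{X}$ solves $\mathscr{C}\star\mathscr{X}=\mathscr{D}$. For any competitor $\mathscr{Y}$, write $\mathscr{E}=\mathscr{Y}-\mathscr{X}$ and expand $f(\mathscr{X}+\mathscr{E})$ using associativity/distributivity of the T-product. After collecting terms and using the identity $(\mathscr{A}\star\mathscr{B})^{T}=\mathscr{B}^{T}\star\mathscr{A}^{T}$ together with $\mathscr{C}^{T}=\mathscr{C}$, the cross terms $\mathscr{X}^{T}\star\mathscr{C}\star\mathscr{E}$ and $\mathscr{E}^{T}\star\mathscr{C}\star\mathscr{X}$ combine so that, after taking the trace of the first frontal slice and invoking $\mathscr{C}\star\mathscr{X}=\mathscr{D}$, they cancel the linear term $-\mathscr{E}^{T}\star\mathscr{D}$ (and its transpose counterpart). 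What remains is
\[
Tr\bigl((f(\mathscr{Y}))^{(1)}\bigr)-Tr\bigl((f(\mathscr{X}))^{(1)}\bigr)=\tfrac{1}{2}\,Tr\bigl((\mathscr{E}^{T}\star\mathscr{C}\star\mathscr{E})^{(1)}\bigr)\geq 0,
\]
where the nonnegativity is exactly the preceding corollary. Hence $\mathscr{X}$ is a minimizer of \eqref{m4}.

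For the reverse direction, suppose $\mathscr{X}$ minimizes $F(\mathscr{Y})=Tr((f(\mathscr{Y}))^{(1)})$. Set $\mathscr{R}=\mathscr{C}\star\mathscr{X}-\mathscr{D}$ and assume for contradiction that $\mathscr{R}\neq\mathscr{O}$. Take $\mathscr{P}=\mathscr{R}/\|\mathscr{R}\|_{F}$, which is a unit-norm direction. By Lemma \ref{m6},
\[
\frac{\partial F}{\partial \mathscr{X}}=Tr\bigl((\mathscr{R}^{T}\star\mathscr{P})^{(1)}\bigr)=\langle\mathscr{R},\mathscr{P}\rangle=\|\mathscr{R}\|_{F}>0,
\]
using the remark that expresses $\langle\cdot,\cdot\rangle$ as the trace of the first frontal slice. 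However, evaluating $F$ along $\mathscr{X}-t\mathscr{P}$ (for small $t>0$) would then decrease $F$ via the same directional derivative argument applied in the opposite direction, contradicting minimality. Therefore $\mathscr{R}=\mathscr{O}$, i.e.\ $\mathscr{C}\star\mathscr{X}=\mathscr{D}$.

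The main obstacle I anticipate is the bookkeeping in the forward direction: one must carefully justify that the two cross terms $Tr((\mathscr{X}^{T}\star\mathscr{C}\star\mathscr{E})^{(1)})$ and $Tr((\mathscr{E}^{T}\star\mathscr{C}\star\mathscr{X})^{(1)})$ are equal, which follows from the trace-transpose property $Tr(\mathscr{A})=Tr(\mathscr{A}^{T})$ combined with $\mathscr{C}^{T}=\mathscr{C}$, so that together they reduce to $2\,\langle\mathscr{C}\star\mathscr{X},\mathscr{E}\rangle$. Once this cancellation is secured, the rest is a routine appeal to the positive-definiteness corollary, and the reverse direction is a one-line consequence of Lemma \ref{m6}.
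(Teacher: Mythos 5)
Your proof is correct, and the forward direction is essentially the paper's own argument: expand $f(\mathscr{X}+\mathscr{E})$, use the trace--transpose identity and $\mathscr{C}^{T}=\mathscr{C}$ to cancel the linear terms against $-\mathscr{E}^{T}\star\mathscr{D}$ via $\mathscr{C}\star\mathscr{X}=\mathscr{D}$, and conclude with the positive-definiteness corollary. Where you genuinely diverge is the reverse direction. The paper does not invoke Lemma \ref{m6} at all: it perturbs the minimizer by $t\,\mathscr{E}_{i_{1}i_{2}i_{3}}$ for each elementary tensor and each $t\in\mathbb{R}$, obtains the inequality
\[
t\,Tr\bigl[(\mathscr{E}_{i_{1}i_{2}i_{3}}^{T}\star(\mathscr{C}\star\mathscr{X}-\mathscr{D}))^{(1)}\bigr]+\tfrac{1}{2}t^{2}\,Tr\bigl[(\mathscr{E}_{i_{1}i_{2}i_{3}}^{T}\star\mathscr{C}\star\mathscr{E}_{i_{1}i_{2}i_{3}})^{(1)}\bigr]\geq 0,
\]
and lets $t$ range over both signs to force the linear coefficient --- which by the remark equals the $(i_{1},i_{2},i_{3})$ entry of $\mathscr{C}\star\mathscr{X}-\mathscr{D}$ --- to vanish entrywise. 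You instead take the single normalized direction $\mathscr{P}=\mathscr{R}/\|\mathscr{R}\|_{F}$ and use the directional-derivative formula of Lemma \ref{m6} to get a strictly negative one-sided derivative along $-\mathscr{P}$, contradicting minimality. Your route is slicker and avoids the entry-by-entry bookkeeping, at the small cost of having to note that $-\mathscr{P}$ is still a unit direction (so Lemma \ref{m6} applies) and that a negative one-sided directional derivative genuinely yields a nearby point with smaller objective; the paper's route is more elementary and directly exhibits each residual entry as zero. Both are sound.
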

\begin{proof}
First, we suppose that $ \mathscr{X} $ solves the equation $ \mathscr{C} \star \mathscr{X}=\mathscr{D} $. Now, for each $ \mathscr{Y} \in \mathbb{R}^{n \times l \times n_{3}} $,
\begin{eqnarray}
\nonumber f(\mathscr{Y})-f(\mathscr{X})&=& \frac{1}{2} \mathscr{Y}^{T}\star \mathscr{C} \star \mathscr{Y} - \mathscr{Y}^{T}\star \mathscr{D} - \frac{1}{2} \mathscr{X}^{T}\star \mathscr{C} \star \mathscr{X} + \mathscr{X}^{T}\star \mathscr{D}\\
\nonumber &=& \frac{1}{2} (\mathscr{Y}-\mathscr{X})^{T} \star \mathscr{C} \star (\mathscr{Y}-\mathscr{X}) + \frac{1}{2} ( \mathscr{X}^{T}\star \mathscr{C}\star \mathscr{Y}+\mathscr{Y}^{T}\star \mathscr{C }\star \mathscr{X})\\
\nonumber &-& \mathscr{Y}^{T}\star \mathscr{D} - \mathscr{X}^{T}\star \mathscr{C} \star \mathscr{X}+ \mathscr{X}^{T}\star \mathscr{D}\\
\nonumber &=& \frac{1}{2} (\mathscr{Y}-\mathscr{X})^{T} \star \mathscr{C} \star (\mathscr{Y}-\mathscr{X}) + \frac{1}{2}( \mathscr{D}^{T} \star \mathscr{Y} + \mathscr{Y}^{T}\star \mathscr{D})\\
\nonumber &-& \mathscr{Y}^{T}\star \mathscr{D} -  \mathscr{X}^{T}\star \mathscr{D} +  \mathscr{X}^{T}\star \mathscr{D}\\
\nonumber &=& \frac{1}{2} (\mathscr{Y}-\mathscr{X})^{T} \star \mathscr{C} \star (\mathscr{Y}-\mathscr{X})
+ \frac{1}{2}( \mathscr{D}^{T} \star \mathscr{Y} - \mathscr{Y}^{T}\star \mathscr{D}).
\end{eqnarray}
The definition of the transpose uder T-product concludes that if $\mathscr{S} \in \mathbb{R}^{k \times k\times j}$, then $ Tr((\mathscr{S})^{(1)}) = Tr((\mathscr{S}^{T})^{(1)})$. Also, since $ \mathscr{C} $ is T-symmetric positive definite, 
\begin{eqnarray}
\nonumber Tr[(f(\mathscr{Y})-f(\mathscr{X})^{(1)}]&=& \frac{1}{2} Tr\bigg [\bigg (\frac{1}{2} (\mathscr{Y}-\mathscr{X})^{T} \star \mathscr{C} \star (\mathscr{Y}-\mathscr{X})
+ \frac{1}{2}( \mathscr{D}^{T} \star \mathscr{Y} - \mathscr{Y}^{T}\star \mathscr{D})\bigg)^{(1)}\bigg]\\
\nonumber &=& \frac{1}{2} Tr[((\mathscr{Y}-\mathscr{X})^{T} \star \mathscr{C} \star (\mathscr{Y}-\mathscr{X}))^{(1)}] \geq 0.
\end{eqnarray}
Thus, $ \mathscr{X}$ solves the optimization problem (\ref{m4}).

 On the other hand, let $ \mathscr{E}_{i_{1}i_{2}i_{3}} \in \mathbb{R}^{n\times l \times n_{3}}$ be a tensor with $e_{i_{1}i_{2}i_{3}}=1$ whose other entries are equal to 0. Now, let $ Tr((f(\mathscr{X}))^{(1)})= \min \limits_{\mathscr{Y}} Tr((f(\mathscr{Y}))^{(1)}) $. Then, for each $ t \in \mathbb{R} $ and any tensor $ \mathscr{E}_{i_{1}i_{2}i_{3}} $ we can write
\begin{eqnarray}
\nonumber Tr((f(\mathscr{X}))^{(1)})& \leq & Tr((f(\mathscr{X}+t \mathscr{E}_{i_{1}i_{2}i_{3}}))^{(1)}) \\
\nonumber &=& Tr\bigg[\bigg(f(\mathscr{X})+t \mathscr{E}_{i_{1}i_{2}i_{3}}^{T}\star (\mathscr{C}\star \mathscr{X}- \mathscr{D})+\frac{1}{2} t^{2} \mathscr{E}_{i_{1}i_{2}i_{3}}^{T}\star \mathscr{C}\star  \mathscr{E}_{i_{1}i_{2}i_{3}} \bigg)^{(1)}\bigg].
\end{eqnarray}
Thus,
\[
t Tr[(\mathscr{E}_{i_{1}i_{2}i_{3}}^{T}\star (\mathscr{C}\star \mathscr{X}- \mathscr{D}))^{(1)}]+\frac{1}{2} t^{2} Tr[(\mathscr{E}_{i_{1}i_{2}i_{3}}^{T}\star \mathscr{C}\star  \mathscr{E}_{i_{1}i_{2}i_{3}})^{(1)}] \geq 0.
\]
Therefore, for each $t$,
\[
Tr[(\mathscr{E}_{i_{1}i_{2}i_{3}}^{T}\star (\mathscr{C}\star \mathscr{X}- \mathscr{D}))^{(1)}] = 0.
\]
Thus, $ (\mathscr{C}\star \mathscr{X}- \mathscr{D})_{i_{1}i_{2}i_{3}} =0$ for all $ i_{1},i_{2},i_{3} $, where $ i_{1}=1,2,\dots,n $, $ i_{2}=1,2,\dots,l $ and $ i_{3}=1,2,\dots,n_{3} $. This proves that $ \mathscr{C}\star \mathscr{X}=\mathscr{D} $.
\end{proof}
Now, we use Lemma \ref{m5} to present an approch to solve the equation $ \mathscr{C} \star \mathscr{X}=\mathscr{D} $. First, we select an initial tensor $ \mathscr{X}_{1} $. Then, we must find a direction $ \mathscr{P}_{1} $ along which $ Tr((f(\mathscr{X}))^{(1)}) $ is decreasing. We put $ \mathscr{P}_{1}= \mathscr{R}_{1}=\mathscr{D}- \mathscr{C}\star \mathscr{X}_{1} $. Now, we try to find a tensor $ \mathscr{X}_{2}=\mathscr{X}_{1}+t_{1} \mathscr{P}_{1} $ such that
\[
(f(\mathscr{X}_{1}+t_{1}\mathscr{P}_{1}))^{(1)}= \min\limits_{t> 0} Tr [(f(\mathscr{X}_{1} + t \mathscr{P}_{1}))^{(1)}].
\]
By getting $ g(t)=Tr [(f(\mathscr{X}_{1} + t \mathscr{P}_{1}))^{(1)}] $, we have
\[
g(t)= Tr((f(\mathscr{X}_{1}))^{(1)})-t Tr((\mathscr{R}_{1}^{T}\star \mathscr{P}_{1})^{(1)})+\frac{1}{2} t^{2} Tr((\mathscr{P}_{1}^{T} \star \mathscr{C}\star \mathscr{P}_{1})^{(1)}).
\]
Since the minimum of $ g $ arose at $t_{1}$, $ g^{\prime}(t_{1})=0 $. We put 
\[
g^{\prime}(t)= -Tr((\mathscr{R}_{1}^{T}\star \mathscr{P}_{1} )^{(1)})+t Tr((\mathscr{P}_{1}^{T}\star \mathscr{C}\star \mathscr{P}_{1})^{(1)})=0.
\] 
Thus, we take
\[
t_{1}= \dfrac{Tr((\mathscr{R}_{1}^{T}\star \mathscr{P}_{1})^{(1)})}{Tr((\mathscr{P}_{1}^{T}\star \mathscr{C}\star \mathscr{P}_{1})^{(1)})}=\dfrac{Tr((\mathscr{P}_{1}^{T}\star \mathscr{R}_{1})^{(1)})}{Tr((\mathscr{P}_{1}^{T}\star \mathscr{C}\star \mathscr{P}_{1})^{(1)})}
\]
and we put $ \mathscr{X}_{2} = \mathscr{X}_{1}+t_{1} \mathscr{P}_{1}$, where $ \mathscr{P}_{1}=\mathscr{R}_{1} $. Then,
\[
(f(\mathscr{X}_{1}+t_{1}\mathscr{P}_{1}))^{(1)}= \min\limits_{t> 0} Tr [(f(\mathscr{X}_{1} + t \mathscr{P}_{1}))^{(1)}].
\]
In what follows, we make a tensor $\mathscr{P}_{2} $ orthogonal to $ \mathscr{C}\star \mathscr{P}_{1} $. Suppose that $ \mathscr{P}_{2}=\mathscr{R}_{2}+ \alpha_{1} \mathscr{P}_{1} $, in which $ \mathscr{R}_{2}=\mathscr{D}- \mathscr{C}\star \mathscr{X}_{2}=\mathscr{D}- \mathscr{C}\star (\mathscr{X}_{1}+t_{1} \mathscr{P}_{1})= \mathscr{R}_{1}- t_{1} \mathscr{C}\star \mathscr{P}_{1}$. We know that
\begin{eqnarray}
\nonumber Tr(( \mathscr{P}^{T}_{2}\star \mathscr{C}\star \mathscr{P}_{1})^{(1)})&=& Tr ((\mathscr{R}_{2}^{T} \star \mathscr{C}\star \mathscr{P}_{1})^{(1)}) + \alpha_{1} Tr((\mathscr{P}_{1}^{T}\star \mathscr{C} \star \mathscr{P}_{1})^{(1)})\\
\nonumber &=& Tr(( \mathscr{P}_{1}^{T}\star \mathscr{C}\star \mathscr{R}_{2})^{(1)}) + \alpha_{1} Tr((\mathscr{P}_{1}^{T}\star \mathscr{C} \star \mathscr{P}_{1})^{(1)}).
\end{eqnarray}
Since $ Tr(( \mathscr{P}^{T}_{2}\star \mathscr{C}\star \mathscr{P}_{1})^{(1)})=<\mathscr{C}\star \mathscr{P}_{1}, \mathscr{P}_{2}>=0  $, 
\[
\alpha_{1}=-\frac{Tr(( \mathscr{P}_{1}^{T}\star \mathscr{C}\star \mathscr{R}_{2})^{(1)})}{Tr((\mathscr{P}_{1}^{T}\star \mathscr{C} \star \mathscr{P}_{1})^{(1)})}.
\]
Now, Lemma \ref{m6} allows us to conclude that at $\mathscr{X}=\mathscr{X}_{2}$, $ \frac{\partial F}{\partial \mathscr{X}}$ is 
\begin{eqnarray}
\nonumber \frac{\partial F}{\partial \mathscr{X}}&=& Tr(((\mathscr{C}\star \mathscr{X}_{2}- \mathscr{D})^{T}\star \mathscr{P}_{2})^{(1)})\\
\nonumber &=& - Tr ((\mathscr{R}_{2}^{T}\star (\mathscr{R}_{2}+\alpha_{1}\mathscr{P}_{1}))^{(1)})\\
\nonumber &=& -Tr ((\mathscr{R}_{2}^{T}\star \mathscr{R}_{2})^{(1)})- \alpha_{1} Tr((\mathscr{R}_{2}^{T}\star \mathscr{P}_{1})^{(1)})\\
\nonumber &=& -Tr((\mathscr{R}_{2}^{T}\star \mathscr{R}_{2})^{(1)}) - \alpha_{1} Tr(((\mathscr{R}_{1}-t_{1}\mathscr{C}\star \mathscr{P}_{1})^{T}\star \mathscr{P}_{1})^{(1)})\\
\nonumber &=& -\parallel \mathscr{R}_{2} \parallel^{2} - \alpha_{1} \bigg( Tr((\mathscr{P}_{1}^{T}\star \mathscr{P}_{1})^{(1)}) - \frac{Tr((\mathscr{P}_{1}^{T}\star \mathscr{P}_{1})^{1})}{Tr((\mathscr{P}_{1}^{T}\star \mathscr{C}\star \mathscr{P}_{1})^{(1)})}. Tr((\mathscr{P}_{1}^{T}\star \mathscr{C}\star \mathscr{P}_{1})^{(1)}) \bigg)\\
\nonumber &=& - \parallel \mathscr{R}_{2}\parallel^{2}\leq 0.
\end{eqnarray}
Since the derivative of $ F $ at $ \mathscr{X}_{2} $ is less than 0, $ Tr((f(\mathscr{X}))^{(1)}) $ is decreasing along the direction $ \mathscr{P}_{2} $ at $ \mathscr{X}_{2} $.

In what follows, assume that $ \mathscr{X}_{k+1}=\mathscr{X}_{k}+ t_{k} \mathscr{P}_{k}  $, in which $ t_{k} $ solves the problem
 \[
 \min \limits_{t > 0} Tr((f(\mathscr{X}_{k}+t_{k}\mathscr{P}_{k}))^{(1)}),\]
that is,
 \[
  t_{k}=\frac{Tr((\mathscr{P}_{k}^{T}\star \mathscr{R}_{k})^{(1)})}{Tr((\mathscr{P}_{k}^{T}\star \mathscr{C}\star \mathscr{P}_{k})^{(1)})},
\]
where $ \mathscr{R}_{k}=\mathscr{D}-\mathscr{C}\star \mathscr{X}_{k} $. We want to find a tensor $ \mathscr{P}_{k+1} $ orthogonal to $ \mathscr{C}\star \mathscr{P}_{k} $. Let $ \mathscr{P}_{k+1}=\mathscr{R}_{k+1}+\alpha_{k} \mathscr{P}_{k}$, where $ \mathscr{R}_{k+1}= \mathscr{D}- \mathscr{C}\star \mathscr{X}_{k+1}=\mathscr{D}- \mathscr{C}\star (\mathscr{X}_{k}+t_{k}\mathscr{P}_{k})=\mathscr{R}_{k}-t_{k}\mathscr{C}\star \mathscr{P}_{k}$. We obtain
\[
\alpha_{k}=-\frac{Tr((\mathscr{P}_{k}^{T}\star \mathscr{C} \star \mathscr{R}_{k+1})^{1})}{Tr((\mathscr{P}_{k}^{T}\star \mathscr{C}\star \mathscr{P}_{k})^{(1)})}.
\]
Now, we present an algorithm for solving $ \mathscr{C}\star \mathscr{X}=\mathscr{D} $, when $ \mathscr{C} $ is T-symmetric positive definite, in Algorithm \ref{algo1}.
\begin{algorithm}
   \begin{algorithmic}[1]
    \STATE Input tensors $ \mathscr{C} \in \mathbb{R}^{n\times n \times n_{3}} $, $ \mathscr{D} \in \mathbb{R}^{n\times l \times n_{3}} $ and an initial guess $ \mathscr{X}_{1} \in \mathbb{R}^{n\times l \times n_{3}} $.
     \STATE Compute
       \begin{eqnarray}
       \nonumber \mathscr{R}_{1}&=&\mathscr{D}-\mathscr{C}\star \mathscr{X}_{1},\\
      \nonumber \mathscr{P}_{1}&=&\mathscr{R}_{1},\\
      \nonumber k &=&1.
       \end{eqnarray}
      \STATE If ${\| {R}_{k} \|} < tol $, stop.
        \STATE Else, let $k=k+1$.
        \STATE Compute
          \begin{eqnarray}
         \nonumber \mathscr{X}_{k}&=&\mathscr{X}_{k-1}+\frac{Tr((\mathscr{P}_{k-1}^{T}\star         \mathscr{R}_{k-1})^{(1)})}{Tr((\mathscr{P}_{k-1}^{T}\star \mathscr{C}\star \mathscr{P}_{k-1})^{(1)})} \mathscr{P}_{k-1},\\
         \nonumber \mathscr{R}_{k}&=&\mathscr{D}-\mathscr{C}\star \mathscr{X}_{k},\\
         \nonumber  \mathscr{P}_{k}&=& \mathscr{R}_{k}-\frac{Tr((\mathscr{P}_{k-1}^{T}\star         \mathscr{C}\star \mathscr{R}_{k})^{(1)})}{Tr((\mathscr{P}_{k-1}^{T}\star \mathscr{C}\star \mathscr{P}_{k-1})^{(1)})} \mathscr{P}_{k-1}.
          \end{eqnarray}
        \STATE Return to step $ 3 $.
\end{algorithmic} 
	\caption{Solving the tensor equation with a T-symmetric positive definite coefficient tensor}
	\label{algo1}
\end{algorithm} 
\begin{theorem}
The tensor sequences $ \lbrace \mathscr{R}_{k}\rbrace_{k=1}^{\infty} $ and $ \lbrace \mathscr{P}_{k}\rbrace_{k=1}^{\infty} $ generated by Algorithm \ref{algo1} are orthogonal tensor and $ \mathscr{C} $-orthogonal tensor sequences, respectively, where $ \mathscr{X}_{1} $ is an arbitrary initial tensor.
\end{theorem}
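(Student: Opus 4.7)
I would prove both statements simultaneously by strong induction, showing that for every $k \ge 1$ the following three orthogonality relations hold for all $1 \le j < k$: (i) $\langle \mathscr{R}_k, \mathscr{R}_j\rangle = 0$, (ii) $\langle \mathscr{R}_k, \mathscr{P}_j\rangle = 0$, and (iii) $\langle \mathscr{C}\star\mathscr{P}_k, \mathscr{P}_j\rangle = 0$. Throughout I would rely on the recurrences coming from Algorithm~\ref{algo1}, namely $\mathscr{R}_{k+1} = \mathscr{R}_k - t_k\,\mathscr{C}\star\mathscr{P}_k$ and $\mathscr{P}_{k+1} = \mathscr{R}_{k+1} + \alpha_k\,\mathscr{P}_k$ (with the sign conventions fixed in the algorithm), together with the identity $\langle \mathscr{C}\star\mathscr{A}, \mathscr{B}\rangle = Tr((\mathscr{B}^T\star\mathscr{C}\star\mathscr{A})^{(1)})$ extracted from (\ref{m2}) to move freely between the trace-of-first-slice notation of the algorithm and the inner product of the definition.

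The one-step cases, which both form the base and supply the ``nearest'' instance of the induction, are immediate from the defining formulas for $t_k$ and $\alpha_k$: the relation $\langle \mathscr{R}_{k+1}, \mathscr{P}_k\rangle = 0$ follows by expanding $\langle \mathscr{R}_k - t_k\,\mathscr{C}\star\mathscr{P}_k, \mathscr{P}_k\rangle$ and substituting the formula for $t_k$, while $\langle \mathscr{C}\star\mathscr{P}_{k+1}, \mathscr{P}_k\rangle = 0$ follows by expanding $\langle \mathscr{C}\star\mathscr{R}_{k+1} + \alpha_k\,\mathscr{C}\star\mathscr{P}_k, \mathscr{P}_k\rangle$ and substituting $\alpha_k$. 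Since $\mathscr{P}_1 = \mathscr{R}_1$, the case $j = 1$, $k = 2$ also gives $\langle \mathscr{R}_2, \mathscr{R}_1\rangle = 0$, initializing (i).

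For the inductive step, assuming (i)--(iii) at level $k$, I would argue in the following order. For (ii) at level $k+1$ with $j < k$: expand $\mathscr{R}_{k+1} = \mathscr{R}_k - t_k\,\mathscr{C}\star\mathscr{P}_k$ and split the inner product; the first piece vanishes by (ii) and the second by (iii). For (i) at level $k+1$ with $j \le k$: rewrite $\mathscr{R}_j$ in terms of the search directions using $\mathscr{R}_j = \mathscr{P}_j - \alpha_{j-1}\mathscr{P}_{j-1}$ (with $\mathscr{R}_1 = \mathscr{P}_1$) and apply (ii) just established. For (iii) at level $k+1$ with $j < k$: expand $\mathscr{P}_{k+1} = \mathscr{R}_{k+1} + \alpha_k\mathscr{P}_k$; the term with $\mathscr{P}_k$ vanishes by (iii), and the remaining term $\langle \mathscr{C}\star\mathscr{R}_{k+1}, \mathscr{P}_j\rangle$ I would reduce by invoking the identity $\mathscr{C}\star\mathscr{P}_j = t_j^{-1}(\mathscr{R}_j - \mathscr{R}_{j+1})$ (a direct consequence of the residual recurrence) to convert it into a combination of residual--residual inner products killed by (i).

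The main obstacle is essentially bookkeeping, together with verifying that the scalars $t_j$ appearing in the denominators above are nonzero. This last point is not circular: using $\mathscr{P}_j = \mathscr{R}_j + \alpha_{j-1}\mathscr{P}_{j-1}$ and the already-established $\langle \mathscr{P}_{j-1}, \mathscr{R}_j\rangle = 0$ one obtains $\langle \mathscr{P}_j, \mathscr{R}_j\rangle = \|\mathscr{R}_j\|_F^2$, and since $\mathscr{C}$ is T-symmetric positive definite, $\langle \mathscr{P}_j, \mathscr{C}\star\mathscr{P}_j\rangle > 0$ whenever $\mathscr{P}_j \ne \mathscr{O}$. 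Thus as long as the algorithm has not yet terminated (i.e.\ $\mathscr{R}_j \ne \mathscr{O}$ for all $j \le k$), the denominators are strictly positive and the manipulations above are justified. The claim for $s > l$ then follows by the symmetry of $\langle \cdot,\cdot\rangle$ and of the $\mathscr{C}$-bilinear form, so (i)--(iii) for all $j < k$ yield precisely the orthogonality and $\mathscr{C}$-orthogonality claimed by the theorem.
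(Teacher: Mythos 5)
Your proposal is correct and follows essentially the same route as the paper's proof: induction on $k$ using the two recurrences $\mathscr{R}_{k+1}=\mathscr{R}_k-t_k\,\mathscr{C}\star\mathscr{P}_k$ and $\mathscr{P}_{k+1}=\mathscr{R}_{k+1}+\alpha_k\mathscr{P}_k$, rewriting $\mathscr{R}_j=\mathscr{P}_j-\alpha_{j-1}\mathscr{P}_{j-1}$ to settle the residual orthogonality and $\mathscr{C}\star\mathscr{P}_j=t_j^{-1}(\mathscr{R}_j-\mathscr{R}_{j+1})$ to settle the $\mathscr{C}$-orthogonality. The only differences are organizational: you carry the cross-relation $\langle \mathscr{R}_k,\mathscr{P}_j\rangle=0$ as an explicit induction invariant where the paper extracts the equivalent fact from expanding $\mathscr{P}_k$ as a combination of all earlier residuals (its relations (3.1)--(3.2)), and you add the check that the denominators $t_j$ are strictly positive before termination, a point the paper leaves implicit.
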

\begin{proof}
We use induction to prove the theorem. We know that $ \mathscr{P}_{1}=\mathscr{R}_{1} $. Also,
\begin{eqnarray}
\nonumber Tr((\mathscr{R}_{2}^{T}\star \mathscr{R}_{1})^{(1)}) &=& Tr(((\mathscr{R}_{1}-t_{1}\mathscr{C}\star \mathscr{P}_{1})^{T}\star \mathscr{R}_{1})^{(1)})\\
\nonumber &=& Tr((\mathscr{R}_{1}^{T}\star \mathscr{R}_{1} )^{(1)})- t_{1} Tr((\mathscr{P}_{1}^{T}\star \mathscr{C}\star \mathscr{R}_{1})^{(1)})\\
\nonumber &=& Tr((\mathscr{P}_{1}^{T}\star \mathscr{R}_{1})^{(1)}) - \frac{Tr((\mathscr{P}_{1}^{T}\star \mathscr{R}_{1})^{(1)})}{Tr((\mathscr{P}_{1}^{T}\star \mathscr{C}\star \mathscr{P}_{1})^{(1)})}. Tr((\mathscr{P}_{1}^{T}\star \mathscr{C}\star \mathscr{R}_{1})^{(1)})\\
\nonumber &=&0.
\end{eqnarray}
Since $ \mathscr{P}_{2} $ is orthogonal to $ \mathscr{C}\star \mathscr{P}_{1} $, $Tr((\mathscr{P}_{2}^{T} \star \mathscr{C} \star \mathscr{P}_{1} )^{(1)})=0$ (or $<\mathscr{C}\star \mathscr{P}_{1} ,  \mathscr{P}_{2}> =0$). Let $ \lbrace \mathscr{R}_{i}\rbrace_{i=1}^{k} $ and $ \lbrace \mathscr{P}_{i}\rbrace_{i=1}^{k} $ be mutually orthogonal and $ \mathscr{C} $-mutually orthogonal, respectively, for each positive integer $ k $. We must prove that $ <\mathscr{R}_{j} , \mathscr{R}_{k+1}>=0 $ and $< \mathscr{C}\star \mathscr{P}_{j},\mathscr{P}_{k+1}>=0$, for $ j=1,2,\dots,k $.
\begin{eqnarray}
\nonumber Tr((\mathscr{R}_{k+1}^{T}\star \mathscr{R}_{j})^{(1)}) &=& Tr(((\mathscr{R}_{k}- t_{k} \mathscr{C} \star \mathscr{P}_{k})^{T}\star \mathscr{R}_{j})^{(1)})\\
\nonumber &=& Tr((\mathscr{R}_{k}^{T}\star \mathscr{R}_{j})^{(1)})- t_{k} Tr((\mathscr{P}_{k}^{T}\star \mathscr{C}\star \mathscr{R}_{j})^{(1)}) \\
\nonumber &=& Tr((\mathscr{R}_{k}^{T}\star \mathscr{R}_{j})^{(1)}) - t_{k} Tr ((\mathscr{P}_{k}^{T}\star \mathscr{C}\star (\mathscr{P}_{j}-\alpha_{j-1}\mathscr{P}_{j-1}))^{(1)}) \\
\nonumber &=& Tr((\mathscr{R}_{k}^{T}\star \mathscr{R}_{j})^{(1)}) - t_{k} Tr ((\mathscr{P}_{k}^{T}\star \mathscr{C}\star \mathscr{P}_{j})^{(1)}) + t_{k} \alpha_{j-1} Tr ((\mathscr{P}_{k}^{T}\star \mathscr{C}\star \mathscr{P}_{j-1})^{(1)}).
\end{eqnarray}
Since $Tr((\mathscr{R}_{k}^{T}\star \mathscr{R}_{j})^{(1)})=0  $, $ Tr ((\mathscr{P}_{k}^{T}\star \mathscr{C}\star \mathscr{P}_{j})^{(1)})=0 $, and $ Tr ((\mathscr{P}_{k}^{T}\star \mathscr{C}\star \mathscr{P}_{j-1})^{(1)}) =0 $, in which $ j=1,2,\dots,k-1 $, we have
\[
Tr((\mathscr{R}_{k+1}^{T}\star \mathscr{R}_{j})^{(1)})=0, \qquad j=1,2,\dots,k-1.
\]
It is clear that 
\begin{eqnarray}\label{m8}
\nonumber \mathscr{P}_{k}&=&\mathscr{R}_{k}+\alpha_{k-1} \mathscr{P}_{k-1}\\
\nonumber &=& \mathscr{R}_{k}+\alpha_{k-1} \mathscr{R}_{k-1}+\alpha_{k-1}\alpha_{k-2} \mathscr{P}_{k-2}\\
 &=& \mathscr{R}_{k}+\alpha_{k-1} \mathscr{R}_{k-1}+\alpha_{k-1}\alpha_{k-2} \mathscr{P}_{k-2} + \dots + \alpha_{k-1}\alpha_{k-2} \dots \alpha_{1} \mathscr{R}_{1}.
\end{eqnarray}
Now, relation (\ref{m8}) and induction concludes that
\begin{eqnarray}\label{m9}
Tr((\mathscr{P}_{k}^{T}\star \mathscr{R}_{k})^{(1)})= Tr((\mathscr{R}_{k}^{T}\star \mathscr{R}_{k})^{(1)}).
\end{eqnarray}
Thus, by using relation (\ref{m9}) we obtain
\begin{eqnarray}
\nonumber Tr((\mathscr{R}_{k+1}^{T}\star \mathscr{R}_{k})^{(1)}) &=& Tr((\mathscr{R}_{k}^{T}\star \mathscr{R}_{k})^{(1)})- t_{k} Tr((\mathscr{P}_{k}^{T}\star \mathscr{C}\star \mathscr{P}_{k})^{(1)})\\
\nonumber &+& t_{k}\alpha_{k-1} Tr((\mathscr{P}_{k}^{T}\star \mathscr{C}\star \mathscr{P}_{k-1})^{(1)})\\
\nonumber &=& Tr((\mathscr{R}_{k}^{T}\star \mathscr{R}_{k})^{(1)}) - t_{k} Tr((\mathscr{P}_{k}^{T}\star \mathscr{C}\star \mathscr{P}_{k})^{(1)})\\
\nonumber &=& Tr((\mathscr{R}_{k}^{T}\star \mathscr{R}_{k})^{(1)}) - \frac{Tr((\mathscr{P}_{k}^{T}\star \mathscr{R}_{k})^{(1)})}{Tr((\mathscr{P}_{k}^{T}\star \mathscr{C}\star \mathscr{P}_{k})^{(1)})}. Tr((\mathscr{P}_{k}^{T}\star \mathscr{C}\star \mathscr{P}_{k})^{(1)})\\
\nonumber &=&0.
\end{eqnarray}
It is clear that $ Tr((\mathscr{P}_{k+1} \star \mathscr{A}\star \mathscr{P}_{k})^{(1)}) =0$. Also,
\begin{eqnarray}
\nonumber Tr((\mathscr{P}_{k+1}^{T} \star \mathscr{C}\star \mathscr{P}_{j})^{(1)}) &=& Tr((\mathscr{R}_{k+1}+\alpha_{k} \mathscr{P}_{k})^{T} \star \mathscr{C} \star \mathscr{P}_{j})^{(1)})\\
\nonumber &=& Tr((\mathscr{R}_{k+1}^{T}\star \mathscr{C}\star \mathscr{P}_{j})^{(1)}) + \alpha_{k} Tr((\mathscr{P}_{k}^{T}\star \mathscr{C}\star \mathscr{P}_{j})^{(1)})\\
\nonumber &=& Tr((\mathscr{R}_{k+1}^{T}\star \frac{1}{t_{j}}( \mathscr{R}_{j}- \mathscr{R}_{j+1}))^{(1)})\\
\nonumber &=&0,
\end{eqnarray}
where $ j=1,2,\dots,k-1 $. Hence, the proof is complete by induction.
\end{proof}
By relation (\ref{m9}), the $ \mathscr{X}_{k}$ in step $ 5 $ of Algorithm \ref{algo1} can be expressed as
\begin{eqnarray}
\nonumber \mathscr{X}_{k}&=&\mathscr{X}_{k-1}+\frac{Tr((\mathscr{P}_{k-1}^{T}\star         \mathscr{R}_{k-1})^{(1)})}{Tr((\mathscr{P}_{k-1}^{T}\star \mathscr{C}\star \mathscr{P}_{k-1})^{(1)})} \mathscr{P}_{k-1}\\
\nonumber &=& \mathscr{X}_{k-1}+\frac{Tr((\mathscr{R}_{k-1}^{T}\star \mathscr{R}_{k-1})^{(1)})}{Tr((\mathscr{P}_{k-1}^{T}\star \mathscr{C}\star \mathscr{P}_{k-1})^{(1)})} \mathscr{P}_{k-1}\\
\nonumber &=& \mathscr{X}_{k-1}+\frac{<\mathscr{R}_{k-1},\mathscr{R}_{k-1}>}{Tr((\mathscr{P}_{k-1}^{T}\star \mathscr{C}\star \mathscr{P}_{k-1})^{(1)})} \mathscr{P}_{k-1}\\
\nonumber &=& \mathscr{X}_{k-1}+\frac{\parallel \mathscr{R}_{k-1}\parallel^{2}}{Tr((\mathscr{P}_{k-1}^{T}\star \mathscr{C}\star \mathscr{P}_{k-1})^{(1)})} \mathscr{P}_{k-1}.
\end{eqnarray}
\section{An algorithm for solving consistent tensor equations}
This section presents a technique to solve the consistent tensor equation $ \mathscr{C}\star \mathscr{X}=\mathscr{D} $. The Definition \ref{m11} concludes that the consistent equation $ \mathscr{C}\star \mathscr{X}=\mathscr{D} $ is equivalent to the tensor equation $ \mathscr{C}^{T}\star \mathscr{C}\star \mathscr{X}=\mathscr{C}^{T}\star\mathscr{D} $, in which $ \mathscr{C} \in \mathbb{R}^{n_{1}\times n_{2}\times n_{3}} $, $ \mathscr{X} \in \mathbb{R}^{n_{2}\times l \times n_{3}} $ and $ \mathscr{D} \in \mathbb{R}^{n_{1}\times l \times n_{3}} $.

 Since $ \mathscr{C}^{T}\star \mathscr{C} $ is T-symmetric positive definite, we present the following algorithm for solving consistent tensor equations by modifying Algorithm \ref{algo1}.
\begin{algorithm}
   \begin{algorithmic}[1]
    \STATE Input tensors $ \mathscr{C} \in \mathbb{R}^{n_{1}\times n_{2}\times n_{3}} $, $ \mathscr{D} \in \mathbb{R}^{n_{1}\times l \times n_{3}}  $ and an initial guess $ \mathscr{X}_{1} \in \mathbb{R}^{n_{2}\times l \times n_{3}} $.
     \STATE Compute
       \begin{eqnarray}
       \nonumber \mathscr{R}_{1}&=&\mathscr{D}-\mathscr{C}\star \mathscr{X}_{1},\\
      \nonumber \mathscr{P}_{1}&=& \mathscr{C}^{T}\star \mathscr{R}_{1},\\
      \nonumber \mathscr{Q}_{1}&=& \mathscr{P}_{1},\\
      \nonumber k &=&1.
       \end{eqnarray}
      \STATE If ${\| {R}_{k} \|} < tol $ or ${\| {R}_{k} \|} \geq tol $ but $ {\| {Q}_{k} \|} < tol $, stop.
        \STATE Else, let $k=k+1$.
        \STATE Compute
          \begin{eqnarray}
         \nonumber \mathscr{X}_{k}&=&\mathscr{X}_{k-1}+ \frac{\parallel \mathscr{R}_{k-1} \parallel^{2}}{\parallel \mathscr{Q}_{k-1} \parallel^{2}}. \mathscr{Q}_{k-1},\\
         \nonumber \mathscr{R}_{k}&=&\mathscr{D}-\mathscr{C}\star\mathscr{X}_{k},\\
         \nonumber  \mathscr{P}_{k}&=& \mathscr{C}^{T}\star\mathscr{R}_{k},\\
         \nonumber  \mathscr{Q}_{k}&=&\mathscr{P}_{k}- \frac{Tr((\mathscr{P}_{k}^{T}\star \mathscr{Q}_{k-1}  )^{(1)})}{\parallel \mathscr{Q}_{k-1} \parallel^{2}} \mathscr{Q}_{k-1}.
          \end{eqnarray}
        \STATE Return to step $ 3 $.
\end{algorithmic} 
	\caption{Solving the consistent tensor equation $ \mathscr{C}\star \mathscr{X}=\mathscr{D} $}
	\label{algo2}
\end{algorithm}  
\begin{lemma}\label{m15}
The tensor sequences $ \lbrace \mathscr{R}_{i} \rbrace$ and $ \lbrace \mathscr{Q}_{i} \rbrace$ generated in Algorithm \ref{algo2} satisfy the relation
\[
<\mathscr{R}_{j} , \mathscr{R}_{i+1}>=<\mathscr{R}_{j} , \mathscr{R}_{i}> - \frac{\parallel \mathscr{R}_{i} \parallel^{2}}{\parallel \mathscr{Q}_{i} \parallel^{2}} <\mathscr{P}_{j} ,\mathscr{Q}_{i}>, \qquad i,j=1,2,\dots
\]
for any initial tensor $ \mathscr{X}_{1} $.
\end{lemma}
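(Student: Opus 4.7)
The plan is to reduce the identity to a direct algebraic manipulation using the update formulas in Algorithm \ref{algo2} together with the adjoint relationship between $\mathscr{C}$ and $\mathscr{C}^{T}$ under the T-product inner product. Note that the index $j$ is arbitrary and not tied to $i$, so no induction is required—only a one-step computation from $\mathscr{R}_{i}$ to $\mathscr{R}_{i+1}$.

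First, I would combine the first two updates in step $5$ of Algorithm \ref{algo2} to get a clean recursion for the residual:
\[
\mathscr{R}_{i+1}=\mathscr{D}-\mathscr{C}\star\mathscr{X}_{i+1}=\mathscr{D}-\mathscr{C}\star\Bigl(\mathscr{X}_{i}+\tfrac{\|\mathscr{R}_{i}\|^{2}}{\|\mathscr{Q}_{i}\|^{2}}\mathscr{Q}_{i}\Bigr)=\mathscr{R}_{i}-\tfrac{\|\mathscr{R}_{i}\|^{2}}{\|\mathscr{Q}_{i}\|^{2}}\,\mathscr{C}\star\mathscr{Q}_{i}.
\]
Taking the inner product with $\mathscr{R}_{j}$ and using bilinearity of $\langle\cdot,\cdot\rangle$ immediately yields
\[
\langle\mathscr{R}_{j},\mathscr{R}_{i+1}\rangle=\langle\mathscr{R}_{j},\mathscr{R}_{i}\rangle-\tfrac{\|\mathscr{R}_{i}\|^{2}}{\|\mathscr{Q}_{i}\|^{2}}\,\langle\mathscr{R}_{j},\mathscr{C}\star\mathscr{Q}_{i}\rangle.
\]

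Next, I would convert the inner product $\langle\mathscr{R}_{j},\mathscr{C}\star\mathscr{Q}_{i}\rangle$ into $\langle\mathscr{P}_{j},\mathscr{Q}_{i}\rangle$ using the adjoint identity for the T-product. Concretely, starting from the trace representation in \eqref{m2},
\[
\langle\mathscr{R}_{j},\mathscr{C}\star\mathscr{Q}_{i}\rangle=\mathrm{Tr}\bigl(((\mathscr{C}\star\mathscr{Q}_{i})^{T}\star\mathscr{R}_{j})^{(1)}\bigr)=\mathrm{Tr}\bigl((\mathscr{Q}_{i}^{T}\star\mathscr{C}^{T}\star\mathscr{R}_{j})^{(1)}\bigr)=\mathrm{Tr}\bigl((\mathscr{Q}_{i}^{T}\star\mathscr{P}_{j})^{(1)}\bigr)=\langle\mathscr{P}_{j},\mathscr{Q}_{i}\rangle,
\]
where the second equality uses $(\mathscr{C}\star\mathscr{Q}_{i})^{T}=\mathscr{Q}_{i}^{T}\star\mathscr{C}^{T}$ and the third uses the definition $\mathscr{P}_{j}=\mathscr{C}^{T}\star\mathscr{R}_{j}$ from step $5$ of Algorithm \ref{algo2}. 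Substituting this back produces exactly the claimed identity.

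There is no real obstacle here: the only nontrivial ingredient is the adjoint property $\langle\mathscr{R}_{j},\mathscr{C}\star\mathscr{Q}_{i}\rangle=\langle\mathscr{C}^{T}\star\mathscr{R}_{j},\mathscr{Q}_{i}\rangle$, which is immediate from Remark \eqref{m2}, associativity of $\star$, and the transpose rule $(\mathscr{C}\star\mathscr{D})^{T}=\mathscr{D}^{T}\star\mathscr{C}^{T}$. The lemma holds for every pair $(i,j)$ and every starting tensor $\mathscr{X}_{1}$ because the derivation never uses orthogonality of prior residuals nor any property of $j$ beyond the definition $\mathscr{P}_{j}=\mathscr{C}^{T}\star\mathscr{R}_{j}$. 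This identity is evidently intended as the recursive device for the later convergence analysis, analogous to the role played by relation (\ref{m9}) in the preceding section.
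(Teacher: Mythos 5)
Your argument is correct and is essentially the paper's own proof: both substitute the update $\mathscr{X}_{i+1}=\mathscr{X}_{i}+\frac{\parallel \mathscr{R}_{i}\parallel^{2}}{\parallel \mathscr{Q}_{i}\parallel^{2}}\mathscr{Q}_{i}$ into $\mathscr{R}_{i+1}=\mathscr{D}-\mathscr{C}\star\mathscr{X}_{i+1}$, expand by linearity, and then use the transpose rule $(\mathscr{C}\star\mathscr{Q}_{i})^{T}=\mathscr{Q}_{i}^{T}\star\mathscr{C}^{T}$ together with relation (\ref{m2}) and $\mathscr{P}_{j}=\mathscr{C}^{T}\star\mathscr{R}_{j}$ to obtain $\langle\mathscr{P}_{j},\mathscr{Q}_{i}\rangle$. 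Phrasing the computation in terms of inner products rather than traces is a cosmetic difference only.
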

\begin{proof}
By relation (\ref{m2}) and Algorithm \ref{algo2}, 
\begin{eqnarray}
\nonumber Tr((\mathscr{R}_{i+1}^{T}\star \mathscr{R}_{j})^{(1)}) &=& Tr(((\mathscr{D}-\mathscr{C}\star \mathscr{X}_{i+1})^{T}\star \mathscr{R}_{j})^{(1)})\\
\nonumber &=& Tr((\mathscr{D}-\mathscr{C}\star (\mathscr{X}_{i}- \frac{\parallel \mathscr{R}_{i} \parallel^{2}}{\parallel \mathscr{Q}_{i} \parallel^{2}} \mathscr{Q}_{i})^{T}\star \mathscr{R}_{j})^{(1)})\\
\nonumber &=& Tr \big[(  (\mathscr{D}-\mathscr{C}\star \mathscr{X}_{i})^{T}\star \mathscr{R}_{j}- \frac{\parallel \mathscr{R}_{i} \parallel^{2}}{\parallel \mathscr{Q}_{i} \parallel^{2}} (\mathscr{C}\star \mathscr{Q}_{i})^{T}  \star \mathscr{R}_{j})^{(1)}\big]\\
\nonumber &=&  Tr((\mathscr{R}_{i}^{T}\star \mathscr{R}_{j})^{(1)})- \frac{\parallel \mathscr{R}_{i} \parallel^{2}}{\parallel \mathscr{Q}_{i} \parallel^{2}} Tr((\mathscr{Q}_{i}^{T}\star \mathscr{C}^{T}\star \mathscr{R}_{j})^{(1)})\\
\nonumber &=&  Tr((\mathscr{R}_{i}^{T}\star \mathscr{R}_{j} )^{(1)})- \frac{\parallel \mathscr{R}_{i} \parallel^{2}}{\parallel \mathscr{Q}_{i} \parallel^{2}} Tr((\mathscr{Q}_{i}^{T}\star \mathscr{P}_{j})^{(1)}).
\end{eqnarray}
\end{proof}
The below lemma proves that if $ \mathscr{Q}_{i}=\mathscr{O} $, then $ \mathscr{R}_{i}=\mathscr{O} $.
\begin{lemma}\label{m19}
Let $ \mathscr{X}^{*} $ solves the consistent tensor equation $\mathscr{C}\star \mathscr{X}=\mathscr{D} $, where $ \mathscr{C} \in \mathbb{R}^{n_{1}\times n_{2}\times n_{3}} $, $ \mathscr{X} \in \mathbb{R}^{n_{2}\times l \times n_{3}} $ and $ \mathscr{D} \in \mathbb{R}^{n_{1}\times l \times n_{3}} $. Then, the tensor sequences $ \lbrace \mathscr{R}_{i}\rbrace $ and $ \lbrace \mathscr{Q}_{i}\rbrace $ generated by Algorithm \ref{algo2} satisfy
\begin{eqnarray}\label{m12}
Tr[((\mathscr{X}^{*}-\mathscr{X}_{k})\star \mathscr{Q}_{k}^{T})^{(1)}]= \parallel \mathscr{R}_{k}\parallel^{2}, \qquad k=1,2,\dots
\end{eqnarray}
\end{lemma}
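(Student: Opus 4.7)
The plan is to rewrite the left-hand side as an inner product and then argue by induction on $k$. First I would use relation (\ref{m2}) together with the cyclic identity $Tr((\mathscr{A}\star\mathscr{B})^{(1)}) = Tr((\mathscr{B}\star\mathscr{A})^{(1)})$ (which follows from $\overline{A}\,\overline{B}$ and $\overline{B}\,\overline{A}$ having equal trace, applied block-by-block via the Fourier representation) to identify
\[
Tr\bigl[((\mathscr{X}^{*}-\mathscr{X}_{k})\star \mathscr{Q}_{k}^{T})^{(1)}\bigr] = \bigl\langle \mathscr{X}^{*}-\mathscr{X}_{k},\,\mathscr{Q}_{k}\bigr\rangle.
\]
I would also record in advance the adjoint rule $\langle \mathscr{Y},\,\mathscr{C}^{T}\star \mathscr{Z}\rangle = \langle \mathscr{C}\star \mathscr{Y},\,\mathscr{Z}\rangle$, which is immediate from (\ref{m2}) together with $(\mathscr{C}^{T}\star \mathscr{Z})^{T}=\mathscr{Z}^{T}\star \mathscr{C}$.

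For the base case $k=1$, I would exploit $\mathscr{Q}_{1}=\mathscr{P}_{1}=\mathscr{C}^{T}\star \mathscr{R}_{1}$ and apply the adjoint rule:
\[
\langle \mathscr{X}^{*}-\mathscr{X}_{1},\,\mathscr{Q}_{1}\rangle
=\langle \mathscr{C}\star(\mathscr{X}^{*}-\mathscr{X}_{1}),\,\mathscr{R}_{1}\rangle
=\langle \mathscr{D}-\mathscr{C}\star \mathscr{X}_{1},\,\mathscr{R}_{1}\rangle
=\|\mathscr{R}_{1}\|^{2},
\]
where $\mathscr{C}\star \mathscr{X}^{*}=\mathscr{D}$ is used in the middle equality.

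For the inductive step, assume (\ref{m12}) holds at index $k$ and split $\mathscr{Q}_{k+1}=\mathscr{P}_{k+1}-\beta_{k}\mathscr{Q}_{k}$ (with $\beta_{k}=Tr((\mathscr{P}_{k+1}^{T}\star \mathscr{Q}_{k})^{(1)})/\|\mathscr{Q}_{k}\|^{2}$) as in step 5 of Algorithm \ref{algo2}. I would expand
\[
\langle \mathscr{X}^{*}-\mathscr{X}_{k+1},\,\mathscr{Q}_{k+1}\rangle
=\langle \mathscr{X}^{*}-\mathscr{X}_{k+1},\,\mathscr{P}_{k+1}\rangle
-\beta_{k}\,\langle \mathscr{X}^{*}-\mathscr{X}_{k+1},\,\mathscr{Q}_{k}\rangle .
\]
The first summand reduces by the same adjoint manoeuvre as in the base case to $\langle \mathscr{R}_{k+1},\mathscr{R}_{k+1}\rangle=\|\mathscr{R}_{k+1}\|^{2}$. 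It then remains to kill the second summand, i.e.\ to show $\langle \mathscr{X}^{*}-\mathscr{X}_{k+1},\,\mathscr{Q}_{k}\rangle=0$. Substituting the update $\mathscr{X}_{k+1}=\mathscr{X}_{k}+\tfrac{\|\mathscr{R}_{k}\|^{2}}{\|\mathscr{Q}_{k}\|^{2}}\mathscr{Q}_{k}$,
\[
\langle \mathscr{X}^{*}-\mathscr{X}_{k+1},\,\mathscr{Q}_{k}\rangle
=\langle \mathscr{X}^{*}-\mathscr{X}_{k},\,\mathscr{Q}_{k}\rangle
-\frac{\|\mathscr{R}_{k}\|^{2}}{\|\mathscr{Q}_{k}\|^{2}}\|\mathscr{Q}_{k}\|^{2}
=\|\mathscr{R}_{k}\|^{2}-\|\mathscr{R}_{k}\|^{2}=0
\]
by the induction hypothesis, which closes the argument.

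The main obstacle is conceptual rather than computational: one must notice that the right decomposition for $\mathscr{Q}_{k+1}$ (into $\mathscr{P}_{k+1}$ plus a $\mathscr{Q}_{k}$-correction) peels the problem into two pieces, one of which becomes $\|\mathscr{R}_{k+1}\|^{2}$ after using the adjoint identity and the consistency $\mathscr{C}\star\mathscr{X}^{*}=\mathscr{D}$, while the other collapses precisely because the step size $\|\mathscr{R}_{k}\|^{2}/\|\mathscr{Q}_{k}\|^{2}$ in Algorithm \ref{algo2} was engineered so that the inductive hypothesis for (\ref{m12}) at index $k$ forces $\langle \mathscr{X}^{*}-\mathscr{X}_{k+1},\mathscr{Q}_{k}\rangle$ to vanish. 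Everything else is manipulation of $\langle\cdot,\cdot\rangle$ and the trace-of-first-frontal-slice formalism already established in Section~\ref{sec:6}.
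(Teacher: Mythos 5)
Your proposal is correct and follows essentially the same route as the paper's proof: the same induction, the same base-case computation via $\mathscr{Q}_1=\mathscr{P}_1=\mathscr{C}^T\star\mathscr{R}_1$ and the consistency $\mathscr{C}\star\mathscr{X}^*=\mathscr{D}$, and the same two-step inductive argument in which $\langle\mathscr{X}^*-\mathscr{X}_{k+1},\mathscr{Q}_k\rangle=0$ is first derived from the hypothesis and the step size, and then used to reduce $\mathscr{Q}_{k+1}$ to $\mathscr{P}_{k+1}$. The only difference is cosmetic: you phrase everything in inner-product/adjoint language where the paper manipulates traces of first frontal slices directly.
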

\begin{proof}
By induction on $ k $, we prove the lemma. Let $ k=1 $. Then,
\begin{eqnarray}
\nonumber Tr[((\mathscr{X}^{*}-\mathscr{X}_{1})\star \mathscr{Q}_{1}^{T})^{(1)}] &=& Tr[((\mathscr{X}^{*}-\mathscr{X}_{1})\star \mathscr{P}_{1}^{T})^{(1)}]\\
\nonumber &=& Tr[((\mathscr{X}^{*}-\mathscr{X}_{1})\star \mathscr{R}_{1}^{T} \star \mathscr{C})^{(1)}]\\
\nonumber &=& Tr[(\mathscr{R}_{1}^{T} \star  \mathscr{C} \star (\mathscr{X}^{*} - \mathscr{X}_{1}))                    ^{(1)}]\\
\nonumber &=& Tr[(\mathscr{R}_{1}^{T}\star (\mathscr{D}-\mathscr{C} \star \mathscr{X}_{1}))^{(1)}]\\
\nonumber &=& Tr ((\mathscr{R}_{1}^{T}\star \mathscr{R}_{1})^{(1)})\\
\nonumber &=& \parallel \mathscr{R}_{1} \parallel^{2}.
\end{eqnarray}
If equality (\ref{m12}) is true for $ k=1,2,\dots,n $, then 
\begin{eqnarray}\label{m13}
\nonumber Tr (((\mathscr{X}^{*}- \mathscr{X}_{n+1})\star \mathscr{Q}_{n}^{T})^{(1)}) &=& Tr \bigg[ (\mathscr{X}^{*} - \mathscr{X}_{n} - \frac{\parallel \mathscr{R}_{n} \parallel^{2}}{\parallel \mathscr{Q}_{n} \parallel^{2}}\mathscr{Q}_{n})\star \mathscr{Q}_{n}^{T})^{(1)} \bigg]\\
\nonumber &=& Tr [(\mathscr{X}^{*} - \mathscr{X}_{n})\star \mathscr{Q}_{n}^{T} )^{(1)}] - \frac{\parallel \mathscr{R}_{n} \parallel^{2}}{\parallel \mathscr{Q}_{n} \parallel^{2}} Tr((\mathscr{Q}_{n}\star \mathscr{Q}_{n}^{T})^{(1)}) \\
\nonumber &=& \parallel \mathscr{R}_{n} \parallel^{2}- \parallel \mathscr{R}_{n} \parallel^{2}\\
 &=&0.
\end{eqnarray}
Now, by relation (\ref{m13}) we obtain
\begin{eqnarray}
\nonumber Tr (((\mathscr{X}^{*}- \mathscr{X}_{n+1})\star \mathscr{Q}_{n+1}^{T})^{(1)}) &=& Tr \bigg [
((\mathscr{X}^{*}- \mathscr{X}_{n+1})\star (\mathscr{P}_{n+1}-\frac{Tr(( \mathscr{P}_{n+1}^{T}\star  \mathscr{Q}_{n})^{(1)})}{\parallel  \mathscr{Q}_{n} \parallel^{2}}\mathscr{Q}_{n})^{T} )^{(1)}\bigg]\\
\nonumber &=& Tr[ ((\mathscr{X}^{*}- \mathscr{X}_{n+1})\star \mathscr{P}_{n+1}^{T})^{(1)}]\\
\nonumber &=& Tr[ ((\mathscr{X}^{*}- \mathscr{X}_{n+1})\star \mathscr{R}_{n+1}^{T} \star \mathscr{C})^{(1)}]\\
\nonumber &=& Tr[ ( \mathscr{R}_{n+1}^{T} \star \mathscr{C}\star (\mathscr{X}^{*}- \mathscr{X}_{n+1}))^{(1)}]\\
\nonumber &=& Tr[ ( \mathscr{R}_{n+1}^{T} \star(\mathscr{D}- \mathscr{C}\star \mathscr{X}_{n+1}))^{(1)}]\\
\nonumber &=& Tr((\mathscr{R}_{n+1}^{T} \star \mathscr{R}_{n+1})^{(1)})\\
\nonumber &=& \parallel \mathscr{R}_{n+1} \parallel^{2}.
\end{eqnarray}
\end{proof}
The following lemma proves that the tensor sequence $ \lbrace \mathscr{R}_{i} \rbrace $ generated in Algorithm 
\ref{algo2} is orthogonal. Also, the same is true for $ \lbrace \mathscr{Q}_{i} \rbrace $.
\begin{lemma}\label{m20}
Let two tensor sequences $ \lbrace \mathscr{R}_{i} \rbrace $ and $ \lbrace \mathscr{Q}_{i} \rbrace $ be generated in Algorithm \ref{algo2}, where $ \mathscr{R}_{i}\neq \mathscr{O} $ and $i=1,\dots,k$. Then,
\begin{eqnarray}\label{m14}
< \mathscr{R}_{j},\mathscr{R}_{i}> =0, \qquad < \mathscr{Q}_{j},\mathscr{Q}_{i}> =0, \quad \forall i,j=1,\dots,k, \quad i \neq j.
\end{eqnarray}
\end{lemma}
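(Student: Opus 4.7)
The plan is to prove both orthogonality relations simultaneously by induction on $k$, in each inductive step first handling the residuals $\mathscr{R}$ and then the directions $\mathscr{Q}$. The two main ingredients available are Lemma \ref{m15} (which rewrites $\langle\mathscr{R}_j,\mathscr{R}_{i+1}\rangle$ in terms of $\langle\mathscr{R}_j,\mathscr{R}_i\rangle$ and $\langle\mathscr{P}_j,\mathscr{Q}_i\rangle$) and the update rule itself, namely
\[
\mathscr{C}\star\mathscr{Q}_j \;=\; \frac{\parallel\mathscr{R}_j\parallel^{2}}{\parallel\mathscr{Q}_j\parallel^{2}}(\mathscr{R}_j-\mathscr{R}_{j+1})^{-1}\text{-flavored identity: }\;\mathscr{R}_{j+1}=\mathscr{R}_j-\tfrac{\parallel\mathscr{R}_j\parallel^{2}}{\parallel\mathscr{Q}_j\parallel^{2}}\mathscr{C}\star\mathscr{Q}_j,
\]
which lets one trade a factor of $\mathscr{C}^{T}$ on one side of an inner product for a difference of residuals on the other. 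I would also record once and for all the one-step identity $\mathscr{P}_{s}=\mathscr{Q}_{s}+\beta_{s-1}\mathscr{Q}_{s-1}$ with $\beta_{s-1}=\dfrac{\mathrm{Tr}((\mathscr{P}_{s}^{T}\star\mathscr{Q}_{s-1})^{(1)})}{\parallel\mathscr{Q}_{s-1}\parallel^{2}}$, which is just the defining formula of $\mathscr{Q}_{s}$ rewritten.

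Base case $k=2$: the identity $\mathscr{Q}_{1}=\mathscr{P}_{1}$ gives $\langle\mathscr{P}_{1},\mathscr{Q}_{1}\rangle=\parallel\mathscr{Q}_{1}\parallel^{2}$, so Lemma \ref{m15} with $i=j=1$ yields $\langle\mathscr{R}_{1},\mathscr{R}_{2}\rangle=\parallel\mathscr{R}_{1}\parallel^{2}-\parallel\mathscr{R}_{1}\parallel^{2}=0$, and $\langle\mathscr{Q}_{1},\mathscr{Q}_{2}\rangle=0$ is immediate from the very definition of $\mathscr{Q}_{2}$.

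Inductive step: assume (\ref{m14}) for all indices $\le k$ and fix $j\le k$. To prove $\langle\mathscr{R}_{j},\mathscr{R}_{k+1}\rangle=0$, I would apply Lemma \ref{m15} with $i=k$ and split into two cases. For $j<k$ the first term vanishes by the inductive hypothesis, and the second term $\langle\mathscr{P}_{j},\mathscr{Q}_{k}\rangle$ vanishes because the expansion $\mathscr{P}_{j}=\mathscr{Q}_{j}+\beta_{j-1}\mathscr{Q}_{j-1}$ reduces it to two inner products of the form $\langle\mathscr{Q}_{\le k-1},\mathscr{Q}_{k}\rangle$, which are zero by the inductive hypothesis. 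For $j=k$ the same expansion gives $\langle\mathscr{P}_{k},\mathscr{Q}_{k}\rangle=\parallel\mathscr{Q}_{k}\parallel^{2}$, and Lemma \ref{m15} collapses to $\parallel\mathscr{R}_{k}\parallel^{2}-\parallel\mathscr{R}_{k}\parallel^{2}=0$. This finishes the residual half for level $k+1$.

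For the direction half $\langle\mathscr{Q}_{j},\mathscr{Q}_{k+1}\rangle=0$, I would use the defining relation $\mathscr{Q}_{k+1}=\mathscr{P}_{k+1}-\dfrac{\langle\mathscr{P}_{k+1},\mathscr{Q}_{k}\rangle}{\parallel\mathscr{Q}_{k}\parallel^{2}}\mathscr{Q}_{k}$: the case $j=k$ is built into the definition, while for $j<k$ the inductive hypothesis kills the $\mathscr{Q}_{k}$ contribution and leaves $\langle\mathscr{Q}_{j},\mathscr{P}_{k+1}\rangle=\langle\mathscr{C}\star\mathscr{Q}_{j},\mathscr{R}_{k+1}\rangle$. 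Substituting $\mathscr{C}\star\mathscr{Q}_{j}=\dfrac{\parallel\mathscr{Q}_{j}\parallel^{2}}{\parallel\mathscr{R}_{j}\parallel^{2}}(\mathscr{R}_{j}-\mathscr{R}_{j+1})$ turns this into a linear combination of $\langle\mathscr{R}_{j},\mathscr{R}_{k+1}\rangle$ and $\langle\mathscr{R}_{j+1},\mathscr{R}_{k+1}\rangle$; since $j+1\le k$, both are zero by the residual half already established at level $k+1$. The main delicate point to track is precisely this ordering, as the Q-relation at level $k+1$ genuinely uses the R-relation at the same level (which is why the inductive step must handle $\mathscr{R}$ first), and the well-definedness of all scalars is guaranteed by the hypothesis $\mathscr{R}_{i}\neq\mathscr{O}$ together with Lemma \ref{m19}, which forces $\mathscr{Q}_{i}\neq\mathscr{O}$ as well.
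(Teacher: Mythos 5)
Your proposal is correct and follows essentially the same route as the paper: induction on $k$, handling the $\mathscr{R}$-orthogonality first via Lemma \ref{m15} and the expansion $\mathscr{P}_{s}=\mathscr{Q}_{s}+\beta_{s-1}\mathscr{Q}_{s-1}$, then the $\mathscr{Q}$-orthogonality by reducing $\langle\mathscr{Q}_{j},\mathscr{P}_{k+1}\rangle$ to differences of residual inner products already shown to vanish at the same level. The only cosmetic difference is that where you substitute the update rule $\mathscr{R}_{j+1}=\mathscr{R}_{j}-\tfrac{\parallel\mathscr{R}_{j}\parallel^{2}}{\parallel\mathscr{Q}_{j}\parallel^{2}}\mathscr{C}\star\mathscr{Q}_{j}$ directly, the paper invokes Lemma \ref{m15} a second time — the same identity.
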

\begin{proof}
We use induction on $ k $ to prove this lemma. Let $ k=2 $. Then, by using Lemma \ref{m15},
\begin{eqnarray}
\nonumber Tr((\mathscr{R}_{2}^{T}\star \mathscr{R}_{1})^{(1)})&=& Tr((\mathscr{R}_{1}^{T}\star \mathscr{R}_{1})^{(1)})- \frac{\parallel \mathscr{R}_{1} \parallel^{2}}{\parallel \mathscr{Q}_{1} \parallel^{2}} Tr((\mathscr{Q}_{1}^{T} \star \mathscr{P}_{1})^{(1)})\\
\nonumber &=& \parallel \mathscr{R}_{1} \parallel^{2} - \frac{\parallel \mathscr{R}_{1} \parallel^{2}}{\parallel \mathscr{Q}_{1} \parallel^{2}} Tr((\mathscr{Q}_{1}^{T} \star \mathscr{Q}_{1})^{(1)})\\
\nonumber &=&0.
\end{eqnarray}
Also,
\begin{eqnarray}
\nonumber Tr((\mathscr{Q}_{2}^{T}\star \mathscr{Q}_{1})^{(1)})&=& Tr[((\mathscr{P}_{2}- \frac{Tr((\mathscr{P}_{2}^{T} \star \mathscr{Q}_{1})^{(1)})}{\parallel \mathscr{Q}_{1} \parallel^{2}} \mathscr{Q}_{1})^{T}\star \mathscr{Q}_{1})^{(1)}]\\
\nonumber &=& Tr ((\mathscr{P}_{2}^{T}\star \mathscr{Q}_{1})^{(1)}) - \frac{Tr((\mathscr{P}_{2}^{T} \star \mathscr{Q}_{1})^{(1)})}{\parallel \mathscr{Q}_{1} \parallel^{2}} Tr((\mathscr{Q}_{1}^{T} \star \mathscr{Q}_{1})^{(1)})\\
\nonumber &=& 0.
\end{eqnarray}
If relation (\ref{m14}) holds for $ k=1,2,\dots,n $, then
\begin{eqnarray}
\nonumber Tr((\mathscr{R}_{n+1}^{T}\star \mathscr{R}_{j})^{(1)}) &=& Tr((\mathscr{R}_{n}^{T}\star \mathscr{R}_{j})^{(1)}) - \frac{\parallel \mathscr{R}_{n} \parallel^{2}}{\parallel \mathscr{Q}_{n} \parallel^{2}} Tr((\mathscr{Q}_{n}^{T}\star \mathscr{P}_{j})^{(1)})\\
\nonumber &=& Tr((\mathscr{R}_{n}^{T}\star \mathscr{R}_{j})^{(1)}) - \frac{\parallel \mathscr{R}_{n} \parallel^{2}}{\parallel \mathscr{Q}_{n} \parallel^{2}} Tr[(\mathscr{Q}_{n}^{T} \star (\mathscr{Q}_{j}+ \frac{Tr((\mathscr{P}_{j}^{T} \star \mathscr{Q}_{j-1})^{(1)})}{\parallel \mathscr{Q}_{j-1} \parallel^{2}}     \mathscr{Q}_{j-1}))^{(1)}]\\
\nonumber &=&0,
\end{eqnarray}
where $ j=1,2,\dots,n-1 $. Now, let $ j=n $. Then,
\begin{eqnarray}
\nonumber Tr((\mathscr{R}_{n+1}^{T}\star \mathscr{R}_{n})^{(1)}) &=& Tr((\mathscr{R}_{n}^{T}\star \mathscr{R}_{n})^{(1)}) - \frac{\parallel \mathscr{R}_{n} \parallel^{2}}{\parallel \mathscr{Q}_{n} \parallel^{2}} Tr((\mathscr{Q}_{n}^{T}\star \mathscr{P}_{n})^{(1)})\\
\nonumber &=& \parallel \mathscr{R}_{n} \parallel^{2} - \frac{\parallel \mathscr{R}_{n} \parallel^{2}}{\parallel \mathscr{Q}_{n} \parallel^{2}} Tr\bigg[( \mathscr{Q}_{n}^{T} \star (\mathscr{Q}_{n}+ \frac{Tr((\mathscr{P}_{n}^{T}\star \mathscr{Q}_{n-1})^{(1)})}{\parallel \mathscr{Q}_{n-1} \parallel^{2}}         \mathscr{Q}_{n-1}) )^{(1)}\bigg]\\
\nonumber &=&0.
\end{eqnarray}
In what follows, we consider the sequence $ \lbrace \mathscr{Q}_{i} \rbrace$ and prove its orthogonality. We can write
\begin{eqnarray}\label{m17}
\nonumber Tr((\mathscr{Q}_{n+1}^{T}\star \mathscr{Q}_{j})^{(1)}) &=& Tr [((\mathscr{P}_{n+1} - \frac{Tr((\mathscr{P}_{n+1}^{T}\star \mathscr{Q}_{n})^{(1)}) }{\parallel \mathscr{Q}_{n} \parallel^{2}}\mathscr{Q}_{n})^{T}\star \mathscr{Q}_{j})^{(1)}]\\
 &=& Tr((\mathscr{P}_{n+1}^{T}\star \mathscr{Q}_{j})^{(1)}) - \frac{Tr((\mathscr{P}_{n+1}^{T}\star \mathscr{Q}_{n})^{(1)})}{\parallel \mathscr{Q}_{n} \parallel^{2}} Tr((\mathscr{Q}_{n}^{T}\star \mathscr{Q}_{j})^{(1)}).
\end{eqnarray}
If we put $ j=n $, then equation (\ref{m17}) becomes zero. If $j=1,2,\dots,n-1$, then Lemma \ref{m15} allows us to conclude that
\begin{eqnarray}
\nonumber Tr((\mathscr{Q}_{n+1}^{T}\star \mathscr{Q}_{j})^{(1)})&=& Tr((\mathscr{P}_{n+1}^{T}\star \mathscr{Q}_{j})^{(1)})\\
\nonumber &=&Tr((\mathscr{Q}_{j}^{T}\star \mathscr{P}_{n+1})^{(1)})\\
\nonumber &=&\frac{\parallel \mathscr{Q}_{j} \parallel^{2}}{\parallel \mathscr{R}_{j} \parallel^{2}} \bigg[ Tr((\mathscr{R}_{j}^{T}\star \mathscr{R}_{n+1})^{(1)}) - Tr((\mathscr{R}_{j+1}^{T}\star \mathscr{R}_{n+1})^{(1)})\bigg]\\
\nonumber &=& 0.
\end{eqnarray}
\end{proof}
The following theorem finds the greatest number of steps for the convergence of Algorithm \ref{algo2}.
\begin{theorem}\label{m21}
Under the notations in Algorithm \ref{algo2}, the sequence $ \lbrace \mathscr{X}_{k} \rbrace $ produced by Algorithm \ref{algo2} converges to an exact solution of the consistent tensor equation $ \mathscr{C}\star \mathscr{X}=\mathscr{D} $ in at most $ n_{1}l n_{3}+1 $ iteration steps for any initial tensor $\mathscr{X}_{1}  \in \mathbb{R}^{n_{2}\times l \times n_{3}} $.
\end{theorem}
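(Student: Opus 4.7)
The plan is to exploit the orthogonality structure established in Lemma \ref{m20} together with a dimension-counting argument. The tensors $\mathscr{R}_k$ all lie in the real vector space $\mathbb{R}^{n_1 \times l \times n_3}$, which, when equipped with the scalar inner product $\langle \cdot , \cdot \rangle$ from the Preliminaries, is isomorphic to a Euclidean space of dimension $n_1 l n_3$. So any mutually orthogonal collection of nonzero residuals can have at most $n_1 l n_3$ members.

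First, I would dispose of the easy case: if $\mathscr{R}_k = \mathscr{O}$ for some $k \le n_1 l n_3 + 1$, then by definition of $\mathscr{R}_k$ we already have $\mathscr{C} \star \mathscr{X}_k = \mathscr{D}$, so $\mathscr{X}_k$ is an exact solution and the algorithm has terminated (step $3$ of Algorithm \ref{algo2}). Next, I would handle the secondary stopping condition using Lemma \ref{m19}: if $\mathscr{Q}_k = \mathscr{O}$, then $\mathrm{Tr}[((\mathscr{X}^{*}-\mathscr{X}_k)\star \mathscr{Q}_k^{T})^{(1)}] = 0$, so $\|\mathscr{R}_k\|^2 = 0$, hence $\mathscr{R}_k = \mathscr{O}$, reducing back to the previous case.

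Now the main argument: assume for contradiction that the algorithm does not terminate within $n_1 l n_3 + 1$ iterations. Then $\mathscr{R}_i \neq \mathscr{O}$ for all $i = 1, 2, \ldots, n_1 l n_3 + 1$. Applying Lemma \ref{m20} with $k = n_1 l n_3 + 1$, the residuals $\mathscr{R}_1, \ldots, \mathscr{R}_{n_1 l n_3 + 1}$ form a mutually orthogonal family of nonzero tensors in $\mathbb{R}^{n_1 \times l \times n_3}$. Since an orthogonal set of nonzero vectors is linearly independent, we would have $n_1 l n_3 + 1$ linearly independent vectors in a space of dimension $n_1 l n_3$, a contradiction. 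Hence some $\mathscr{R}_k$ with $k \le n_1 l n_3 + 1$ must vanish, which by the above means $\mathscr{X}_k$ solves $\mathscr{C}\star \mathscr{X}=\mathscr{D}$.

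The main obstacle I anticipate is not the counting step itself, which is routine, but making sure the induction hypothesis of Lemma \ref{m20} is legitimately available at every intermediate step. The lemma requires $\mathscr{R}_i \neq \mathscr{O}$ throughout, and one must verify that during the iterations the denominators $\|\mathscr{Q}_{i}\|^2$ appearing in the recursion for $\mathscr{X}_k$ and $\mathscr{Q}_k$ do not vanish before termination; this is exactly why Lemma \ref{m19} was needed above. Once those cases are folded into the early-termination argument, the contradiction with the dimension of $\mathbb{R}^{n_1 \times l \times n_3}$ closes the proof cleanly.
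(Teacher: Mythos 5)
Your proposal is correct and follows essentially the same route as the paper: both invoke Lemma \ref{m19} to guarantee that $\mathscr{Q}_i \neq \mathscr{O}$ as long as $\mathscr{R}_i \neq \mathscr{O}$ (so the iteration is well defined), and then apply the orthogonality of the residuals from Lemma \ref{m20} together with the fact that $\mathbb{R}^{n_{1}\times l \times n_{3}}$ has dimension $n_{1} l n_{3}$ to force $\mathscr{R}_{n_{1}ln_{3}+1}=\mathscr{O}$. Your explicit handling of the early-termination cases is a slightly more careful write-up of the same argument.
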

\begin{proof}
Let $\mathscr{R}_{i}$, where $i=1,2,\dots,n_{1}ln_{3}$, be a non-zero tensor. The Lemma \ref{m19} shows that $\mathscr{Q}_{i}\neq \mathscr{O}  $, in which $i=1,2,\dots,n_{1}ln_{3}$. Thus, Algorithm \ref{algo2} produces $\mathscr{X}_{n_{1}ln_{3}+1}, \mathscr{R}_{n_{1}ln_{3}+1} $. Now, Lemma \ref{m20} tells us that sequence
$ \lbrace \mathscr{R}_{1},\mathscr{R}_{2},\dots, \mathscr{R}_{n_{1}ln_{3}+1}\rbrace $ is orthogonal  in $ \mathbb{R}^{n_{1}\times l\times n_{3}} $. But, this is impossible, since the space $ \lbrace \mathscr{X}\vert \mathscr{X} \in \mathbb{R}^{n_{1}\times l\times n_{3}} \rbrace $ has dimension $n_{1}. l. n_{3}$. Hence, $ \mathscr{R}_{n_{1}ln_{3}+1}= \mathscr{O} $ and $ \mathscr{X}_{n_{1}ln_{3}+1} $ is the exact solution.
\end{proof}
\begin{corollary}
The tensor equation $ \mathscr{C}\star \mathscr{X}=\mathscr{D} $ is inconsistent if and only if there exists an integer $ k $ for which $ \mathscr{R}_{k} \neq \mathscr{O} $ while $ \mathscr{Q}_{k} =\mathscr{O} $.
\end{corollary}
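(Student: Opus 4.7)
The plan is to handle each implication separately, using Lemma \ref{m19} for the easier direction and Lemma \ref{m20} together with a dimension-counting argument for the harder one.

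For the $(\Leftarrow)$ direction, I would argue by contradiction. Assume there exists $k$ with $\mathscr{R}_{k}\neq \mathscr{O}$ and $\mathscr{Q}_{k}=\mathscr{O}$, and suppose towards a contradiction that $\mathscr{C}\star \mathscr{X}=\mathscr{D}$ is consistent with some exact solution $\mathscr{X}^{*}$. Then Lemma \ref{m19} gives
\[
\parallel \mathscr{R}_{k}\parallel^{2} = Tr\bigl[((\mathscr{X}^{*}-\mathscr{X}_{k})\star \mathscr{Q}_{k}^{T})^{(1)}\bigr].
\]
Since $\mathscr{Q}_{k}=\mathscr{O}$, the right side vanishes, forcing $\mathscr{R}_{k}=\mathscr{O}$, contradicting the assumption. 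Hence the equation must be inconsistent.

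For the $(\Rightarrow)$ direction, assume the equation is inconsistent, so $\mathscr{R}_{k}\neq \mathscr{O}$ for every iterate. I would argue again by contradiction: suppose $\mathscr{Q}_{k}\neq \mathscr{O}$ for all $k$ as well. Then Algorithm \ref{algo2} never terminates, so it produces arbitrarily long sequences $\{\mathscr{R}_{1},\mathscr{R}_{2},\dots\}$ of nonzero tensors in $\mathbb{R}^{n_{1}\times l \times n_{3}}$. By Lemma \ref{m20}, the residuals $\mathscr{R}_{1},\dots,\mathscr{R}_{n_{1}l n_{3}+1}$ form a mutually orthogonal collection of nonzero tensors in a space of dimension $n_{1}l n_{3}$, which is impossible. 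Therefore some $k\le n_{1}ln_{3}+1$ must satisfy $\mathscr{Q}_{k}=\mathscr{O}$, and since the equation is inconsistent $\mathscr{R}_{k}\neq \mathscr{O}$ at that index as well.

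The main subtlety is verifying that Lemma \ref{m20} still applies in the inconsistent setting. Its hypothesis only requires $\mathscr{R}_{i}\neq\mathscr{O}$ for the indices involved (and, implicitly, that $\mathscr{Q}_{i}\neq\mathscr{O}$ so that the updates in Algorithm \ref{algo2} are well defined), both of which are guaranteed inside the contradiction hypothesis. Once this is confirmed, the orthogonality/dimension clash closes the argument cleanly; no further calculation is needed.
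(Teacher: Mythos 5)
Your proof is correct and follows essentially the same route as the paper: sufficiency via Lemma \ref{m19} (when a solution exists, $\mathscr{Q}_k=\mathscr{O}$ forces $\mathscr{R}_k=\mathscr{O}$), and necessity by contradiction using orthogonality of the residuals and a dimension count. The only difference is that the paper simply cites Theorem \ref{m21} for the necessity direction, whereas you inline its key step (Lemma \ref{m20} plus the dimension argument) — a slightly more careful choice, since Theorem \ref{m21} is nominally stated for consistent equations and your version makes explicit that the orthogonality argument does not depend on consistency.
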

\begin{proof}
The sufficiency of this corollary can be proved by using Lemma \ref{m19}. Now, let the equation $ \mathscr{C}\star \mathscr{X}=\mathscr{D} $ be inconsistent. Then, $ \mathscr{R}_{i} \neq 0$ for all $ i $. Now, let $ \mathscr{Q}_{i} \neq 0$ for all $ i $. Now, Theorem \ref{m21} concludes that the equation $ \mathscr{C}\star \mathscr{X}=\mathscr{D} $ has a solution. But, this contradicts the assumption.
\end{proof}
 We know that, for $ C \in \mathbb{R}^{m\times n} $ and $ d \in \mathbb{R}^{m} $ with consistent equation $ Cx=d $. The solution $ x^{*}=C^{T} y$, where $ y \in \mathbb{R}^{m} $, satisfies:
\[
\parallel x^{*}\parallel_{F}\leq \parallel x\parallel_{F}, \quad \forall  x \in \lbrace x \in \Bbb R^{n}\vert Cx=d\rbrace,
\]
with equality if and only if $x=x^{*}$. The following theorem presents a minimal Frobenius norm solution of $ \mathscr{C}\star \mathscr{X}=\mathscr{D} $.
\begin{theorem}\label{m40}
Under the notations in Algorithm \ref{algo2}, consider $ \mathscr{X}_{1}=\mathscr{C}^{T}\star \mathscr{H} $,  where $ \mathscr{H} \in \mathbb{R}^{n_{1}\times l \times n_{3}} $ is arbitrary, then the solution $\tilde{\mathscr{X}} $ obtained by Algorithm \ref{algo2} has minimal Frobenius norm. In particular, we can put $ \mathscr{X}_{1}=\mathscr{O} \in \mathbb{R}^{n_{2}\times l \times n_{3}} $.
\end{theorem}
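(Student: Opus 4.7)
The plan is to prove this in two stages: first establish a structural invariant that every iterate produced by Algorithm \ref{algo2} lies in the range of $\mathscr{C}^{T}$ (under T-product), and then use the standard orthogonal-decomposition argument to conclude minimality of the Frobenius norm among all solutions.

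For the first stage, I would proceed by induction on $k$. The base case is immediate: by hypothesis $\mathscr{X}_{1}=\mathscr{C}^{T}\star\mathscr{H}$, and from step 2 of Algorithm \ref{algo2} we have $\mathscr{P}_{1}=\mathscr{C}^{T}\star\mathscr{R}_{1}$ and $\mathscr{Q}_{1}=\mathscr{P}_{1}$, so both $\mathscr{X}_{1}$ and $\mathscr{Q}_{1}$ are of the form $\mathscr{C}^{T}\star(\cdot)$. For the inductive step, assume $\mathscr{X}_{k-1}=\mathscr{C}^{T}\star\mathscr{G}_{k-1}$ and $\mathscr{Q}_{k-1}=\mathscr{C}^{T}\star\mathscr{K}_{k-1}$. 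Then the update formulas in step 5 give
\[
\mathscr{X}_{k}=\mathscr{C}^{T}\star\Bigl(\mathscr{G}_{k-1}+\tfrac{\|\mathscr{R}_{k-1}\|^{2}}{\|\mathscr{Q}_{k-1}\|^{2}}\mathscr{K}_{k-1}\Bigr),\quad \mathscr{P}_{k}=\mathscr{C}^{T}\star\mathscr{R}_{k},
\]
and $\mathscr{Q}_{k}=\mathscr{P}_{k}-\beta_{k}\mathscr{Q}_{k-1}$ is likewise in the range of $\mathscr{C}^{T}$ (as a linear combination of two such tensors, using left distributivity of $\star$). Thus the invariant propagates, and in particular the terminal iterate satisfies $\tilde{\mathscr{X}}=\mathscr{C}^{T}\star\mathscr{G}$ for some tensor $\mathscr{G}$.

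For the second stage, let $\mathscr{X}'$ be any other solution of $\mathscr{C}\star\mathscr{X}=\mathscr{D}$. Then $\mathscr{W}:=\mathscr{X}'-\tilde{\mathscr{X}}$ satisfies $\mathscr{C}\star\mathscr{W}=\mathscr{O}$, i.e.\ $\mathscr{W}$ lies in the null space of the T-product action by $\mathscr{C}$. I would compute
\[
\langle\tilde{\mathscr{X}},\mathscr{W}\rangle=\langle\mathscr{C}^{T}\star\mathscr{G},\mathscr{W}\rangle=Tr\bigl((\mathscr{W}^{T}\star\mathscr{C}^{T}\star\mathscr{G})^{(1)}\bigr)=Tr\bigl(((\mathscr{C}\star\mathscr{W})^{T}\star\mathscr{G})^{(1)}\bigr)=0,
\]
using the identity $\langle\mathscr{A},\mathscr{B}\rangle=Tr((\mathscr{B}^{T}\star\mathscr{A})^{(1)})$ from Remark (\ref{m2}), together with the transpose rule $(\mathscr{C}^{T}\star\mathscr{G})^{T}\star\mathscr{W}=\mathscr{W}^{T}\star\mathscr{C}^{T}\star\mathscr{G}$ and the trace invariance under transposition. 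The Pythagorean identity then yields
\[
\|\mathscr{X}'\|_{F}^{2}=\|\tilde{\mathscr{X}}+\mathscr{W}\|_{F}^{2}=\|\tilde{\mathscr{X}}\|_{F}^{2}+\|\mathscr{W}\|_{F}^{2}\geq\|\tilde{\mathscr{X}}\|_{F}^{2},
\]
with equality iff $\mathscr{W}=\mathscr{O}$, proving minimality.

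The main obstacle, as I see it, is purely bookkeeping rather than conceptual: I need to verify carefully that each step of Algorithm \ref{algo2} preserves membership in the range of $\mathscr{C}^{T}$, since the recurrence for $\mathscr{Q}_{k}$ mixes $\mathscr{P}_{k}$ (which is manifestly in that range) with the previous $\mathscr{Q}_{k-1}$ via a scalar-valued trace coefficient. The only tool required is the left distributivity of $\star$ over addition and scalar multiplication, but the induction must be run simultaneously on the pair $(\mathscr{X}_{k},\mathscr{Q}_{k})$ rather than on $\mathscr{X}_{k}$ alone, since the update for $\mathscr{X}_{k}$ refers to $\mathscr{Q}_{k-1}$. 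The special case $\mathscr{X}_{1}=\mathscr{O}$ is simply the choice $\mathscr{H}=\mathscr{O}$.
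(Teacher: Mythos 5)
Your proof is correct, but the second stage takes a genuinely different route from the paper. The paper only asserts the range invariant $\tilde{\mathscr{X}}=\mathscr{C}^{T}\star\mathscr{Y}$ (your stage one supplies the induction it omits), and then proves minimality by passing to the Fourier domain: it rewrites $\mathscr{C}\star\mathscr{X}=\mathscr{D}$ as $\overline{C}\,\overline{X}=\overline{D}$, vectorizes via $(I_{l n_{3}}\otimes\overline{C})\,vec(\overline{X})=vec(\overline{D})$, and invokes the classical matrix fact that among all solutions of a consistent system $Cx=d$ the one of the form $C^{T}y$ has minimal norm, using $\|\mathscr{X}\|_{F}=\tfrac{1}{\sqrt{n_{3}}}\|\overline{X}\|_{F}$ to transfer the conclusion back. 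You instead argue entirely in the tensor domain: any other solution differs from $\tilde{\mathscr{X}}$ by a null-space element $\mathscr{W}$, the identity $\langle\mathscr{C}^{T}\star\mathscr{G},\mathscr{W}\rangle=Tr\bigl(((\mathscr{C}\star\mathscr{W})^{T}\star\mathscr{G})^{(1)}\bigr)=0$ gives orthogonality, and the Pythagorean identity finishes. Your version is more self-contained and arguably cleaner: it avoids the paper's slightly delicate bookkeeping with the complex DFT blocks (where $\overline{C^{T}}$ is really the conjugate transpose of $\overline{C}$ and the parameter $y$ ought to range over a complex space), and as a bonus it shows the minimal-norm solution is unique. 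What the paper's route buys is a direct reduction to well-known matrix theory. Both are valid; no gap.
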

\begin{proof}
In Algorithm \ref{algo2}, if we put the tensor $ \mathscr{X}_{1}=\mathscr{C}^{T}\star \mathscr{H} $, where $ \mathscr{H} \in \mathbb{R}^{n_{1}\times l \times n_{3}} $ is an arbitrary tensor, then Algorithm \ref{algo2} products the solution $ \tilde{\mathscr{X}} $ of the form
\begin{eqnarray}\label{m25}
\tilde{\mathscr{X}}=\mathscr{C}^{T}\star \mathscr{Y}, \quad \mathscr{Y} \in \mathbb{R}^{n_{1}\times l \times n_{3}}.
\end{eqnarray}
By Definition \ref{m11}, we can write 
\begin{eqnarray}\label{m24}
\nonumber \mathscr{C}\star \mathscr{X}=\mathscr{D} &\Leftrightarrow & \overline{C}.\overline{X}=\overline{D}\\
 &\Leftrightarrow & (I_{l.n_{3}} \otimes \overline{C}) vec(\overline{X}) =vec(\overline{D}),
\end{eqnarray}
in which $ C=(c_{1},c_{2},\dots,c_{n}) \in \mathbb{R}^{m \times n} $ implies $ vec(C)=(c_{1}^{T},c_{2}^{T},\dots , c_{n}^{T})^{T} \in \mathbb{R}^{m n\times 1}$. 
By relation (\ref{m24}), if $ \mathscr{X}^{*} $ is the solution of $ \mathscr{C}\star \mathscr{X}=\mathscr{D} $ that satisfies the relation
\[
vec(\overline{X^{*}}) \in \lbrace (I_{l.n_{3}}\otimes \overline{C})^{T}y, \quad y \in \mathbb{R}^{n_{1}.n_{3}. l . n_{3}\times 1 } \rbrace,
\]
then $ \mathscr{X}^{*} $ is unique and has minimal Frobenius norm. We use (\ref{m25}) and conclude that
\begin{eqnarray}\label{m26}
\nonumber \tilde{\mathscr{X}}=\mathscr{C}^{T}\star \mathscr{Y} &\Leftrightarrow &\overline{\tilde{X}}= \overline{C^{T}}. \overline{Y}\\
&\Leftrightarrow & vec(\overline{\tilde{X}})=(I_{l.n_{3}}\otimes \overline{C^{T}}) vec(\overline{Y}).
\end{eqnarray}
Thus,
\begin{eqnarray}
\nonumber  vec(\overline{\tilde{X}}) \in \lbrace (I_{l.n_{3}}\otimes \overline{C})^{T}y, \quad y\in \mathbb{R}^{n_{1}.n_{3}.l.n_{3}\times 1} \rbrace,
\end{eqnarray}
and this completes the proof.
\end{proof}
\section{An algorithm for solving inconsistent tensor equations}
The solutions $ \tilde{\mathscr{X}} $ of the optimization problem $ \min\limits_{\mathscr{X}}  \parallel \mathscr{C}\star \mathscr{X} -\mathscr{D} \parallel$ are called the least-square solutions of the tensor equation $\mathscr{C}\star \mathscr{X} =\mathscr{D} $. In the following, we consider the inconsistent equation $ \mathscr{C}\star \mathscr{X} =\mathscr{D} $ and compose an iterative approch to solve the problem $ \min\limits_{\mathscr{X}}  \parallel \mathscr{C}\star \mathscr{X} -\mathscr{D} \parallel$, in which $ \mathscr{C}$, $ \mathscr{X}$ and $ \mathscr{D} $ are tensors with edequate size.

\begin{lemma}\label{m32}
If $ \tilde{\mathscr{X}} $ is the minimizer of 
\begin{eqnarray}\label{m30}
\varphi (\mathscr{X}) = \parallel \mathscr{C}\star \mathscr{X} - \mathscr{D} \parallel^{2}, 
\end{eqnarray}
where $ \mathscr{C} \in \mathbb{R}^{n_{1}\times n_{2}\times n_{3}} $, $ \mathscr{X} \in \mathbb{R}^{n_{2}\times l\times n_{3}} $ and $ \mathscr{D} \in \mathbb{R}^{n_{1}\times l \times n_{3}} $, then $ \tilde{\mathscr{X}} $ satisfies
\begin{eqnarray}\label{m31}
\mathscr{C}^{T} \star \mathscr{C} \star \mathscr{X}= \mathscr{C}^{T} \star \mathscr{D}.
\end{eqnarray}
\end{lemma}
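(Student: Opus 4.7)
The plan is to derive the normal equations (\ref{m31}) from a first-order optimality argument. First I would perturb around the minimizer: for an arbitrary direction $ \mathscr{P} \in \mathbb{R}^{n_{2}\times l\times n_{3}} $ and a scalar $ t \in \mathbb{R} $, expand, using left distributivity of the T-product and bilinearity of the inner product,
\[
\varphi(\tilde{\mathscr{X}}+t\mathscr{P}) = \parallel \mathscr{C}\star\tilde{\mathscr{X}}-\mathscr{D}\parallel^{2} + 2t <\mathscr{C}\star\tilde{\mathscr{X}}-\mathscr{D},\mathscr{C}\star\mathscr{P}> + t^{2}\parallel \mathscr{C}\star\mathscr{P}\parallel^{2}.
\]
Since $\tilde{\mathscr{X}}$ minimizes $\varphi$, the coefficient of $t$ must vanish for every $\mathscr{P}$, yielding the stationarity condition $<\mathscr{C}\star\tilde{\mathscr{X}}-\mathscr{D},\mathscr{C}\star\mathscr{P}>=0$.

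Next I would invoke the adjoint identity $<\mathscr{C}\star\mathscr{A},\mathscr{B}>\,=\,<\mathscr{A},\mathscr{C}^{T}\star\mathscr{B}>$, which follows from the trace representation (\ref{m2}) together with $(\mathscr{C}\star\mathscr{A})^{T}=\mathscr{A}^{T}\star\mathscr{C}^{T}$: indeed, $<\mathscr{C}\star\mathscr{A},\mathscr{B}>=Tr((\mathscr{B}^{T}\star\mathscr{C}\star\mathscr{A})^{(1)})=Tr(((\mathscr{C}^{T}\star\mathscr{B})^{T}\star\mathscr{A})^{(1)})=\,<\mathscr{A},\mathscr{C}^{T}\star\mathscr{B}>$. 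Applying it to the stationarity condition rewrites it as $<\mathscr{P},\mathscr{C}^{T}\star\mathscr{C}\star\tilde{\mathscr{X}}-\mathscr{C}^{T}\star\mathscr{D}>\,=0$ for every $\mathscr{P}\in\mathbb{R}^{n_{2}\times l\times n_{3}}$. Choosing $\mathscr{P}$ to range over the single-entry tensors $\mathscr{E}_{i_{1}i_{2}i_{3}}$ (as in the proof of Lemma \ref{m5}) forces every entry of $\mathscr{C}^{T}\star\mathscr{C}\star\tilde{\mathscr{X}}-\mathscr{C}^{T}\star\mathscr{D}$ to vanish, i.e.\ (\ref{m31}).

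The main obstacle, though a mild one, is merely the careful verification of the adjoint identity in the T-product setting; the rest is the usual derivation of the Gauss normal equations. A parallel route, if desired, would be to pass to the Fourier domain: since $\parallel \mathscr{C}\star\mathscr{X}-\mathscr{D}\parallel_{F}^{2}=\frac{1}{n_{3}}\parallel \overline{C}\,\overline{X}-\overline{D}\parallel_{F}^{2}$, the problem decouples into block least-squares problems in each frontal frequency whose classical normal equations $\overline{C}^{\ast}\overline{C}\,\overline{X}=\overline{C}^{\ast}\overline{D}$, after inverting the DFT and using the correspondence between $\mathscr{C}^{T}$ and $\overline{C}^{\ast}$, recover exactly (\ref{m31}).
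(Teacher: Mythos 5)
Your proposal is correct and follows essentially the same route as the paper's proof: a first-order perturbation of $\varphi$ at the minimizer, vanishing of the linear term in $t$, and testing against the single-entry tensors $\mathscr{E}_{i_{1}i_{2}i_{3}}$ to force every entry of $\mathscr{C}^{T}\star\mathscr{C}\star\tilde{\mathscr{X}}-\mathscr{C}^{T}\star\mathscr{D}$ to vanish. The only cosmetic difference is that you package the trace manipulations into the adjoint identity $<\mathscr{C}\star\mathscr{A},\mathscr{B}>\,=\,<\mathscr{A},\mathscr{C}^{T}\star\mathscr{B}>$ (which you verify correctly from (\ref{m2})), whereas the paper carries out the equivalent computation inline with traces of first frontal slices.
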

\begin{proof}
 Let $ \mathscr{E}_{i_{1}i_{2}i_{3}} \in \mathbb{R}^{n_{2}\times l \times n_{3}}$ be a tensor with $e_{i_{1}i_{2}i_{3}}=1$ whose other entries are equal to 0. Now, let $ \parallel \mathscr{C}\star \mathscr{X} - \mathscr{D} \parallel^{2}= \min \limits_{\mathscr{Y}} \parallel \mathscr{C}\star \mathscr{Y} - \mathscr{D} \parallel^{2}$. Then, for each $ t \in \mathbb{R} $ and any tensor $ \mathscr{E}_{i_{1}i_{2}i_{3}} $ we can write
\begin{eqnarray}
\nonumber \varphi (\mathscr{X}) \leq \varphi (\mathscr{X}+t \mathscr{E}_{i_{1}i_{2}i_{3}}) & \Leftrightarrow & Tr [((\mathscr{X}^{T} \star \mathscr{C}^{T}) \star (\mathscr{C} \star \mathscr{X}) - 2 \mathscr{D}^{T}\star (\mathscr{C} \star \mathscr{X}) + 2 \mathscr{D}^{T}\star \mathscr{D})^{(1)}] \\
\nonumber & \leq & Tr [(((\mathscr{X}+t \mathscr{E}_{i_{1}i_{2}i_{3}})^{T} \star \mathscr{C}^{T}) \star (\mathscr{C} \star (\mathscr{X}+t \mathscr{E}_{i_{1}i_{2}i_{3}})) \\
\nonumber &- &2 \mathscr{D}^{T}\star (\mathscr{C} \star (\mathscr{X}+t \mathscr{E}_{i_{1}i_{2}i_{3}})) + 2 \mathscr{D}^{T} \star \mathscr{D})^{(1)}] \\
\nonumber &=& Tr[( \mathscr{X}^{T} \star \mathscr{C}^{T}\star \mathscr{C} \star \mathscr{X} - 2 \mathscr{D}^{T}\star \mathscr{C} \star \mathscr{X} + 2 \mathscr{D}^{T}\star \mathscr{D} +  t  (\mathscr{X}^{T} \star \mathscr{C}^{T}\star \mathscr{C}) \star \mathscr{E}_{i_{1}i_{2}i_{3}}\\
\nonumber & + & t \mathscr{E}_{i_{1}i_{2}i_{3}}^{T}\star (\mathscr{C}^{T}\star \mathscr{C} \star \mathscr{X}) + t^{2}  \mathscr{E}_{i_{1}i_{2}i_{3}}^{T} \star (\mathscr{C}^{T}\star \mathscr{C}) \star             \mathscr{E}_{i_{1}i_{2}i_{3}} \\
\nonumber &-& 2 t (\mathscr{D}^{T}\star \mathscr{C}) \star \mathscr{E}_{i_{1}i_{2}i_{3}})^{(1)}].
\end{eqnarray}
Then,
\begin{eqnarray}
\nonumber Tr[( t  (\mathscr{X}^{T} \star \mathscr{C}^{T}\star \mathscr{C}) \star \mathscr{E}_{i_{1}i_{2}i_{3}} &+& t \mathscr{E}_{i_{1}i_{2}i_{3}}^{T}\star (\mathscr{C}^{T}\star \mathscr{C} \star \mathscr{X}) + t^{2}  \mathscr{E}_{i_{1}i_{2}i_{3}}^{T} \star (\mathscr{C}^{T}\star \mathscr{C}) \star  \mathscr{E}_{i_{1}i_{2}i_{3}}\\
\nonumber & - & 2 t (\mathscr{D}^{T}\star \mathscr{C}) \star \mathscr{E}_{i_{1}i_{2}i_{3}})^{(1)}]\geq 0.
\end{eqnarray}
Thus, for each $ t $,
\begin{eqnarray}
\nonumber t \bigg[Tr(2\mathscr{E}_{i_{1}i_{2}i_{3}}^{T}\star (\mathscr{C}^{T}\star \mathscr{C} \star \mathscr{X}) + t  \mathscr{E}_{i_{1}i_{2}i_{3}}^{T} \star (\mathscr{C}^{T}\star \mathscr{C}) \star  \mathscr{E}_{i_{1}i_{2}i_{3}} -  2 \mathscr{E}_{i_{1}i_{2}i_{3}}^{T} \star (\mathscr{C}^{T}\star \mathscr{D}))^{(1)}\bigg] \geq 0.
\end{eqnarray}
Hence,
\begin{eqnarray}
\nonumber Tr[(2\mathscr{E}_{i_{1}i_{2}i_{3}}^{T}\star (\mathscr{C}^{T}\star \mathscr{C} \star \mathscr{X}- \mathscr{C}^{T}\star \mathscr{D}))^{(1)}]=0.
\end{eqnarray}
Thus, for every $1\leq i_{1} \leq n_{2} $, $ 1\leq i_{2} \leq l $ and $ 1\leq i_{3} \leq n_{3} $, $ \mathscr{E}_{i_{1}i_{2}i_{3}}^{T}\star (\mathscr{C}^{T}\star \mathscr{C} \star \mathscr{X}- \mathscr{C}^{T}\star \mathscr{D})_{i_{1}i_{2}i_{3}} =0 $, and this shows that $ \mathscr{C}^{T} \star \mathscr{C} \star \mathscr{X}= \mathscr{C}^{T} \star \mathscr{D} $.
 \end{proof}
\begin{remark}\label{m35}
By Lemma \ref{m32}, the least-square solution of $ \min\limits_{\mathscr{X}} \parallel \mathscr{C}\star \mathscr{X}-\mathscr{D} \parallel$, where $ \mathscr{C}\star \mathscr{X}=\mathscr{D} $ is inconsistent, is the solution set of the consistent normal equation (\ref{m31}).  
\end{remark}
In Algorithm \ref{algo3}, we present an approach for solving $ \min\limits_{\mathscr{X}} \parallel \mathscr{C}\star \mathscr{X}-\mathscr{D} \parallel$, where $ \mathscr{C}\star \mathscr{X}=\mathscr{D} $ is inconsistent, by using Algorithm \ref{algo2}.
\begin{algorithm}
   \begin{algorithmic}[1]
    \STATE Input tensors $ \mathscr{C} \in \mathbb{R}^{n_{1}\times n_{2}\times n_{3}} $, $ \mathscr{D} \in \mathbb{R}^{n_{1}\times l \times n_{3}}  $ and an initial guess $ \mathscr{X}_{1} \in \mathbb{R}^{n_{2}\times l \times n_{3}} $.
     \STATE Compute
       \begin{eqnarray}
       \nonumber \mathscr{R}_{1}&=&\mathscr{C}^{T}\star \mathscr{D}-\mathscr{C}^{T}\star\mathscr{C}\star \mathscr{X}_{1},\\
      \nonumber \mathscr{P}_{1}&=& \mathscr{C}^{T}\star \mathscr{C}\star \mathscr{R}_{1},\\
      \nonumber \mathscr{Q}_{1}&=& \mathscr{P}_{1},\\
      \nonumber k &=&1.
       \end{eqnarray}
      \STATE If ${\| {R}_{k} \|} < tol $, then stop.
        \STATE Else, let $k=k+1$.
        \STATE Compute
          \begin{eqnarray}
         \nonumber \mathscr{X}_{k}&=&\mathscr{X}_{k-1} + \frac{\parallel \mathscr{R}_{k-1} \parallel^{2}}{\parallel \mathscr{Q}_{k-1} \parallel^{2}}. \mathscr{Q}_{k-1},\\
         \nonumber \mathscr{R}_{k}&=&\mathscr{C}^{T}\star \mathscr{D} - \mathscr{C}^{T}\star \mathscr{C} \star\mathscr{X}_{k},\\
         \nonumber  \mathscr{P}_{k}&=& \mathscr{C}^{T}\star \mathscr{C} \star\mathscr{R}_{k},\\
         \nonumber  \mathscr{Q}_{k}&=&\mathscr{P}_{k}- \frac{Tr((\mathscr{P}_{k}^{T}\star \mathscr{Q}_{k-1}  )^{(1)})}{\parallel \mathscr{Q}_{k-1} \parallel^{2}} \mathscr{Q}_{k-1}.
          \end{eqnarray}
        \STATE Return to step $ 3 $.
\end{algorithmic} 
	\caption{Solving the least-square problem $ \min\limits_{\mathscr{X}} \parallel \mathscr{C}\star \mathscr{X}-\mathscr{D} \parallel$}
	\label{algo3}
\end{algorithm} 
\begin{remark}
It is clear that each solution of the consistent tensor equation $ \mathscr{C}\star \mathscr{X}=\mathscr{D} $ is a solution of $ \min\limits_{\mathscr{X}} \parallel \mathscr{C}\star \mathscr{X}-\mathscr{D} \parallel$. Thus, we can use Algorithm \ref{algo3} for solving the consistent equation $ \mathscr{C}\star \mathscr{X}=\mathscr{D} $.
\end{remark}
\begin{theorem}
Under the notations in Algorithm \ref{algo3}, the sequence $ \lbrace \mathscr{X}_{k} \rbrace$ generated by Algorithm \ref{algo3} converges to a solution $ \min\limits_{\mathscr{X}} \parallel \mathscr{C}\star \mathscr{X}-\mathscr{D} \parallel$ in at most $ n_{2}. l . n_{3} $ iteration steps, where $ \mathscr{X}_{1} \in \mathbb{R}^{n_{2}\times l \times n_{3}} $ is an arbitrary tensor. In addition, if we put the tensor $ \mathscr{X}_{1}= \mathscr{C}^{T}\star \mathscr{C}\star \mathscr{H}$, then the solution $ \tilde{X} $ generated by Algorithm \ref{algo3} is the minimal Frobenius norm solution of $ \min\limits_{\mathscr{X}} \parallel \mathscr{C}\star \mathscr{X}-\mathscr{D} \parallel$, where $ \mathscr{H} \in \mathbb{R}^{n_{2}\times l \times n_{3}}  $ is arbitrary. Also, we can put $ \mathscr{X}_{1}=\mathscr{O} \in \mathbb{R}^{n_{2}\times l \times n_{3}} $.
\end{theorem}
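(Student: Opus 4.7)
The plan is to reduce the theorem to the two results already proved for Algorithm \ref{algo2}, by observing that Algorithm \ref{algo3} is nothing but Algorithm \ref{algo2} applied to the normal equation. By Remark \ref{m35}, the set of minimizers of $\varphi(\mathscr{X})=\|\mathscr{C}\star\mathscr{X}-\mathscr{D}\|^{2}$ coincides with the solution set of the tensor equation
\[
(\mathscr{C}^{T}\star\mathscr{C})\star\mathscr{X}=\mathscr{C}^{T}\star\mathscr{D},
\]
which is consistent. Writing $\mathscr{A}:=\mathscr{C}^{T}\star\mathscr{C}\in\mathbb{R}^{n_{2}\times n_{2}\times n_{3}}$ and $\mathscr{B}:=\mathscr{C}^{T}\star\mathscr{D}\in\mathbb{R}^{n_{2}\times l\times n_{3}}$, the updates in step 5 of Algorithm \ref{algo3} are precisely the updates of Algorithm \ref{algo2} for the equation $\mathscr{A}\star\mathscr{X}=\mathscr{B}$: the residual $\mathscr{R}_{k}=\mathscr{B}-\mathscr{A}\star\mathscr{X}_{k}$ and the search direction $\mathscr{P}_{k}=\mathscr{A}^{T}\star\mathscr{R}_{k}$ (using $\mathscr{A}^{T}=\mathscr{A}$ since $\mathscr{C}^{T}\star\mathscr{C}$ is T-symmetric) are identical to those written in Algorithm \ref{algo3}.

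For the first claim, I apply Theorem \ref{m21} to the equation $\mathscr{A}\star\mathscr{X}=\mathscr{B}$. Here the residuals $\mathscr{R}_{k}$ live in $\mathbb{R}^{n_{2}\times l\times n_{3}}$, whose dimension is $n_{2}\,l\,n_{3}$, so by the mutual orthogonality produced by Algorithm \ref{algo2} (Lemma \ref{m20}), the sequence cannot produce more than $n_{2}\,l\,n_{3}$ nonzero residuals. Hence $\mathscr{R}_{k}=\mathscr{O}$ for some $k\leq n_{2}\,l\,n_{3}$, and at that step $\mathscr{X}_{k}$ solves the normal equation, i.e.\ is a minimizer of $\varphi$.

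For the minimum-norm claim, I invoke Theorem \ref{m40} applied to $\mathscr{A}\star\mathscr{X}=\mathscr{B}$. That theorem guarantees that if the initial guess has the form $\mathscr{X}_{1}=\mathscr{A}^{T}\star\mathscr{G}$ for some $\mathscr{G}$, then the iterate produced by Algorithm \ref{algo2} is the minimum Frobenius-norm solution of $\mathscr{A}\star\mathscr{X}=\mathscr{B}$. Since $\mathscr{A}^{T}=\mathscr{C}^{T}\star\mathscr{C}$, the prescribed initialization $\mathscr{X}_{1}=\mathscr{C}^{T}\star\mathscr{C}\star\mathscr{H}$ is precisely of the required form $\mathscr{A}^{T}\star\mathscr{H}$. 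Because the solution set of the normal equation equals the set of least-squares solutions of the original inconsistent equation, the minimum Frobenius-norm solution of the former is the minimum Frobenius-norm least-squares solution of the latter. The choice $\mathscr{X}_{1}=\mathscr{O}$ is admissible since $\mathscr{O}=\mathscr{C}^{T}\star\mathscr{C}\star\mathscr{O}$.

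The only part requiring care is the passage from ``minimum norm solution of the normal equation'' to ``minimum norm least-squares solution of the original problem,'' but once the equivalence of the two solution sets is noted via Remark \ref{m35}, this is immediate. No genuinely new estimate is needed; the argument is essentially bookkeeping that reduces Algorithm \ref{algo3} to Algorithm \ref{algo2} on the normal equation and then quotes Theorems \ref{m21} and \ref{m40}.
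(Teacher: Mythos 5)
Your proposal is correct and matches the paper's approach exactly: the paper's own proof is a one-line reference to Theorems \ref{m21} and \ref{m40}, and your reduction of Algorithm \ref{algo3} to Algorithm \ref{algo2} applied to the consistent normal equation $(\mathscr{C}^{T}\star\mathscr{C})\star\mathscr{X}=\mathscr{C}^{T}\star\mathscr{D}$ (using $(\mathscr{C}^{T}\star\mathscr{C})^{T}=\mathscr{C}^{T}\star\mathscr{C}$ and Remark \ref{m35} to pass between least-squares solutions and normal-equation solutions) is precisely the intended bookkeeping. The only caveat is the iteration count: the orthogonality argument actually gives $\mathscr{R}_{n_{2}ln_{3}+1}=\mathscr{O}$, i.e.\ at most $n_{2}ln_{3}+1$ iterates as in Theorem \ref{m21}, whereas the statement here claims $n_{2}ln_{3}$ --- an off-by-one inherited from the paper's own inconsistent counting rather than a flaw in your argument.
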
 
\begin{proof}
The proof of this theorem is the same as the proofs of Theorems \ref{m21} and \ref{m40}. 
\end{proof}

%
%

\section{Numerical examples}
In this section, we show the effectiveness of our proposed algorithms through comprehensive numerical tests. All computations were performed using MATLAB R2014a on a Windows 7 system with 3 GB RAM. The PSNR of two images ${\mathscr X}$ and ${\mathscr Y}$ is defined as
\[
{\rm PSNR}=10\log _{10}\left({\frac{255^2}{{\rm MSE}}}\right)\,{\rm dB},
\]
where ${\rm MSE}=\frac{\parallel{\mathscr X}-{\mathscr Y}\parallel_F^2}{{\rm num}({\mathscr X})}$ and ''num(${\mathscr X}$)'' represent the total number of elements in data tensor ${\mathscr X}$. The residual norm is  $\parallel\mathscr{C}\star \mathscr{X}-\mathscr{D}\parallel_F$ and the relative error is also defined as $\frac{\parallel\mathscr{C}\star \mathscr{X}-\mathscr{D}\parallel_F}{\parallel\mathscr{D}\parallel_F}$
\begin{example}
Assume that $n_{1}=5, n_{2}=4, n_{3}=3, l=5$. We randomly constructed a tensor $ \mathscr{C} $ by the MATLAB package randn.m:
\begin{equation*}
\mathscr{C}(:,:,1)=\begin{bmatrix}
    1.7380 & -10.6399  &  1.4411  &  0.4655\\
   -0.9092  & -5.8846  & -7.9709  & -1.8908\\
   -4.6977  & -4.9527  &  0.5511 &  -7.4134\\
   -0.1877  & -5.8652  &  3.9353  & -0.2191\\
   -9.4815 &  -8.6271  & -0.0111   & 4.8041
\end{bmatrix}
\end{equation*}
\begin{equation*}
\mathscr{C}(:,:,2)=\begin{bmatrix}
    8.6912 &  -1.1348  & -6.6081  &  3.8850\\
   -2.1510 &  -5.7446 &  -3.1806  &  3.1120\\
   -8.1366 &  10.1217 &   1.5893  &  3.2369\\
    0.8317 & -11.7976  &  0.6902  & -2.1282\\
    1.8813 &  -2.5499 &  -3.5537  &  5.2429
\end{bmatrix}
\end{equation*}
\begin{equation*}
\mathscr{C}(:,:,3)=\begin{bmatrix}
   3.3035  & -6.4419  & -2.7839  & -4.7632\\
   12.5439  & -1.8561 &  -4.4756 &   1.5866\\
    5.3173  & -3.7890  & -2.0466  &  0.3901\\
    5.7846  & -2.8198 &  -0.8044  &  6.6219\\
    0.2649   & 2.7757  &  2.0467 &  -1.0659
\end{bmatrix}.
\end{equation*}
We select the tensor $ \mathscr{D}= \mathscr{C}\star \mathscr{X}^{*}$, where $\mathscr{X}^{*} =ones(n_{2},l,n_{3}) $, and applied Algorithm \ref{algo2} to the equation $ \mathscr{C}\star \mathscr{X} = \mathscr{D} $ to compute $ \mathscr{X}_{k} $ with the initial tensor $ \mathscr{X}_{1}= \mathscr{O} $. 
The elapsed time was $0.005767$ seconds and residual norm was $2.0186e-12$, $k=5$.

As shown in Figure \ref{fig1} below, the residual norm converges to zero when the iteration count $ k $ increases. 

\begin{figure}
	\centering
	\includegraphics[width=0.8\linewidth]{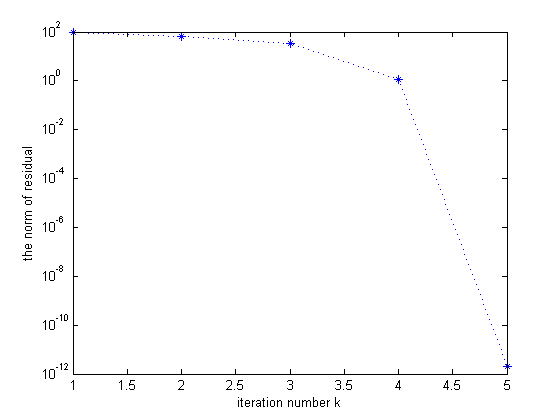}
	\caption{The norm of residual for example 1.}\label{fig1}
\end{figure}
\end{example}

\begin{example}
Assume that $n_{1}=5, n_{2}=4, n_{3}=3, l=3$. We randomly constructed a tensor $ \mathscr{C} $ by the MATLAB package randn.m:
\begin{equation*}
\mathscr{C}(:,:,1)=\begin{bmatrix}
    3.3077 &  -2.4998  & -2.9964  &  1.3519\\
   10.6925 &  1.9151 &  -2.9479 &  -3.2639\\
    2.7057  &  2.0602 &   4.2677 &   2.3861\\
   -7.7044 &   2.0275  & -9.2650 &  -0.3566\\
   -1.0157 &  -1.8189 &  -1.0365 &  -4.6915
\end{bmatrix}
\end{equation*}
\begin{equation*}
\mathscr{C}(:,:,2)=\begin{bmatrix}
   0.8068 &  -9.2306  & -3.6714  &  3.1976 \\
   -1.3409 &  -1.9917 &   2.7032  & -0.4049\\
   -2.0494 &  -2.7177 &   4.8792 &   2.7044 \\
   -3.5566 &  -4.5595 &  -0.7844  & -6.3128 \\
    0.3072 &   3.2635  &  1.3890  &  5.5521
\end{bmatrix}
\end{equation*}
\begin{equation*}
\mathscr{C}(:,:,3)=\begin{bmatrix}
   -4.9478 &  -1.8163 &  -0.9867  &  2.9893\\
   -9.1442  & -5.1029  &  2.0280 &  -6.4064\\
    6.9225 & -15.3649 &  -7.0967 & -11.0163 \\
   -0.3136  &  3.1314 &  -3.6472 &  -2.8562 \\
    2.2446 &  -1.4334  &  5.7366  &  1.0700 \\
\end{bmatrix}.
\end{equation*}
Also, we randomly constructed a tensor $ \mathscr{D} $ by the MATLAB package randn.m:
\begin{equation*}
\mathscr{D}(:,:,1)=\begin{bmatrix}
    0.9424 &  -0.9610  & -0.2857\\
    0.0937  & -0.6537 &  -0.4624\\
   -1.1223 &  -1.2294 &  -0.4098\\
    0.3062 &  -0.2710  & -0.5035\\
   -1.1723  & -0.9000 &   1.2333
\end{bmatrix}
\end{equation*}
\begin{equation*}
\mathscr{D}(:,:,2)=\begin{bmatrix}
   0.6103  &  2.6052  &  0.5476\\
    0.0591 &   0.9724 &   1.5651\\
   -1.4669  &  0.2570  & -1.6933\\
   -1.6258 &  -0.9742  & -0.4494\\
   -1.9648 &  -1.1464 &  -0.0843
\end{bmatrix}
\end{equation*}
\begin{equation*}
\mathscr{D}(:,:,3)=\begin{bmatrix}
    -1.9920 &   0.4092  &  1.3018\\
    0.8412  & -1.1424  & -0.5936\\
   -0.4147 &  -0.6249 &   0.4364\\
    1.9122 &  -1.1687  & -0.5044\\
   -0.3909 &   0.3926 &   0.1021\\
\end{bmatrix}.
\end{equation*}
We applied Algorithm \ref{algo3} to obtain the minimal Frobenius norm approximate solution of the problem $ \min\limits_{\mathscr{X}} \parallel \mathscr{C} \star \mathscr{X}- \mathscr{D} \parallel$ with initial tensor $ \mathscr{X}_{1}= \mathscr{O} $ as follows.
\begin{equation*}
\mathscr{X}(:,:,1)=\begin{bmatrix}
   0.1322 &   0.1079  & -0.1833\\
    0.0133  & -0.1052 &  -0.0152\\
   -0.1267  & -0.0997  &  0.0924\\
    0.0200  &  0.2322 &  -0.0438
\end{bmatrix}
\end{equation*}
\begin{equation*}
\mathscr{X}(:,:,2)=\begin{bmatrix}
   0.1314  &  0.1165 &  -0.0877\\
    0.1348 &   0.0194 &  -0.1298\\
    0.0217  & -0.0115 &   0.1025\\
   -0.1365 &   0.0623  &  0.1802
\end{bmatrix}
\end{equation*}
\begin{equation*}
\mathscr{X}(:,:,3)=\begin{bmatrix}
    0.0541  &  0.0006  & -0.2426\\
    0.0820   & 0.1508  & -0.0388\\
   -0.2393  &  -0.0697 &   0.1875\\
    0.0562 &  -0.1318  &  0.0374
\end{bmatrix}.
\end{equation*}
 The elapsed time was $0.018662$ seconds, and residual norm was $1.0352e-08$, $k=10$.
 
As shown in Figure \ref{fig2}, the residual norm decays to zero with increasing $k$. 
\begin{figure}
	\centering
	\includegraphics[width=0.8\linewidth]{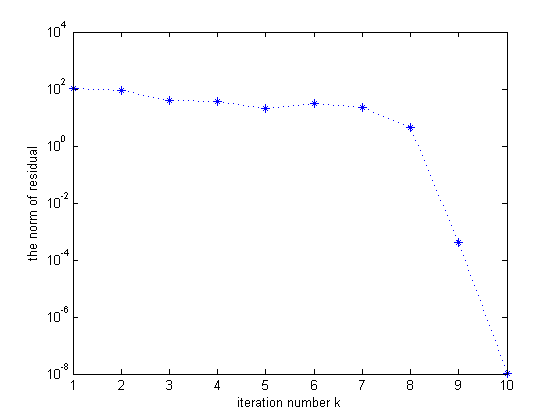}
	\caption{The relative error for example 2.}\label{fig2}
\end{figure}
\end{example}

\begin{example}
In the last two experiments, we worked on synthetics data tensors. In this example, using real-world data, we show that the presented algorithm not only works with random data but also with realistic data tensors. To do so, we consider benchmark ``peppers'', ``baboon'', ``airplane'', and ``house'' images depicted in Figure \ref{sample_fig}. We consider the standard Gaussian random tensor (the Gaussian tensor with mean and variance equal to zero and one respectively) as the coefficient tensor $\mathscr{C}$ and degrade the image using the model $\mathscr{X}_{\rm deg}=\mathscr{C}\star \mathscr{X}_{\rm orig}$, where $\mathscr{X}_{\rm orig}$ and $\mathscr{X}_{\rm deg}$ are the original and degraded images. For the benchmark images mentioned above, the degraded images and the ones recovered by solving the tensor lest-squares problem $\min\limits_{\mathscr{X}}\,||\mathscr{C}\star\mathscr{X}-\mathscr{X}_{\rm deg}||_F$ are reported in Figure \ref{fig_recoverd}. Also, the relative error of the proposed algorithm for four images are displayed in Figure \ref{Err_fig}. The PSNR of the recovered images achieved by the proposed algorithm were quite high, 150.4637 for ``peppers'', 145.2125 for ``house'', 140.7686 for ``airplane'' and 145.6455 for ``baboon''. As can be seen, the restorated images have quite quality in terms of PSND. 

In the next set of experiments, we examine the proposed algorithm with video data sets. Here, we consider ``News'' and ``Foreman'' videos from \url{http://trace.eas.asu.edu/yuv/}. The size of these videos is $144\times 176\times 300$ (300 frames of size $144\times 176$) and for simplicity of computations, we only consider 30 frames and so consider a tensor of size $144\times 176\times 30$ in our computations. A few frames of these videos are displayed in Figure \ref{video_sample}. Similar to the image case, the original video is degraded with a random Gaussian tensor (with mean and variance equal to zero and one respectively) and the proposed iterative algorithm is applied to extract the original video. The relative error history for both videos are displayed in Figure \ref{Err_fig_video}. Also, the recovered frame number 1 for these two videos are shown in Figure \ref{video_recoverd}. These experiments clearly show the efficiency of the proposed methodology for not only solving linear tensor equation but also for real-world applications. 


\begin{figure}
	\centering
	\includegraphics[width=0.5\linewidth]{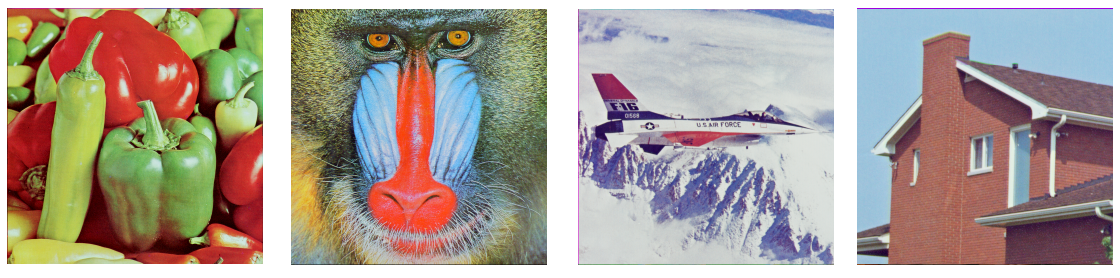}
	\caption{\small Benchmark images used in our simulations.}\label{sample_fig}
\end{figure}

\begin{figure}[H]
	\centering
	\includegraphics[width=0.45\linewidth]{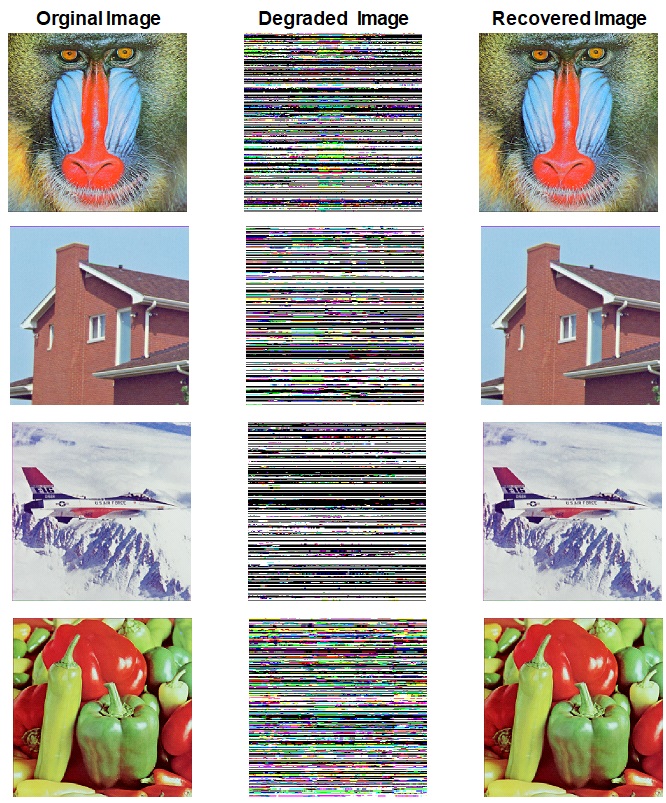}
	\caption{\small The original, the degraded and the recovered images using the proposed iterative method.}\label{fig_recoverd}
\end{figure}

\begin{figure}[H]
	\centering
	\includegraphics[width=0.9\linewidth]{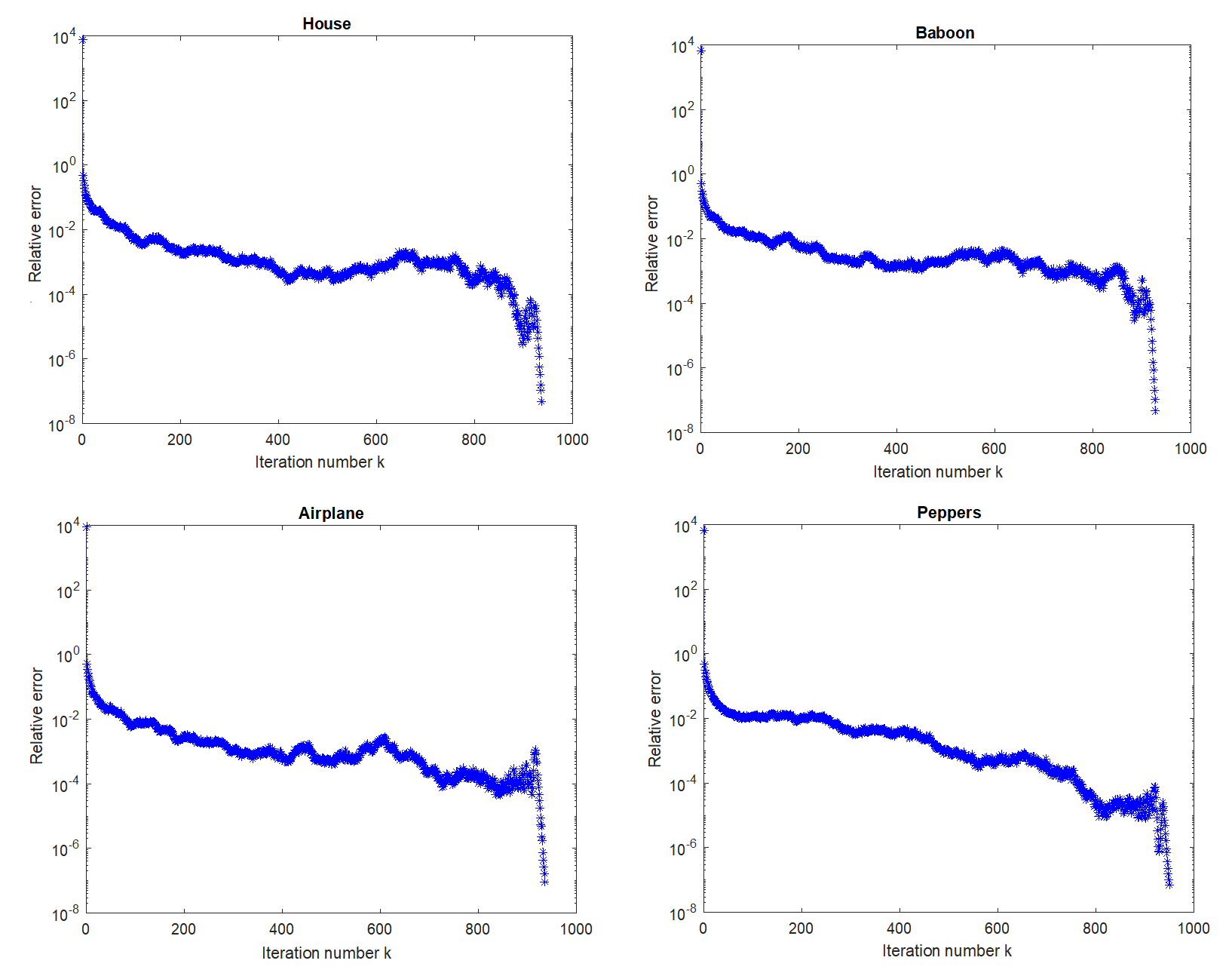}
	\caption{\small The relative error history of the proposed algorithm for four benchmark images.}\label{Err_fig}
\end{figure}

\begin{figure}[H]
	\centering
	\includegraphics[width=0.7\linewidth]{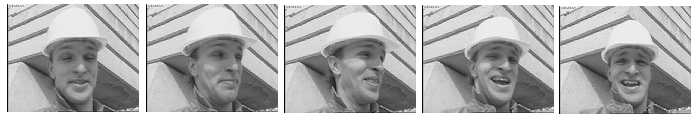}
	\includegraphics[width=0.7\linewidth]{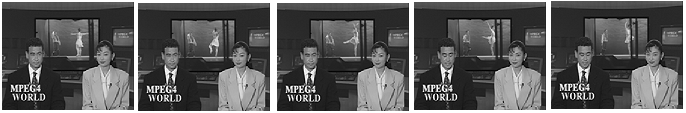}
	\caption{\small Some sample frames of ``Foreman'' (top figure) and ''News'' (bottom figure) videos.}\label{video_sample}
\end{figure}

\begin{figure}[H]
	\centering
	\includegraphics[width=0.45\linewidth]{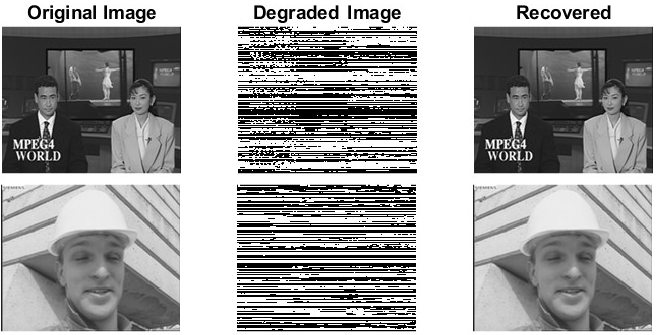}
	\caption{\small The original, the degraded and the recovered frame number 1 (as one example) using the proposed iterative method. All recovered frames had the same quality as the frame number one.}\label{video_recoverd}
\end{figure}

\begin{figure}[H]
	\centering
	\includegraphics[width=0.9\linewidth]{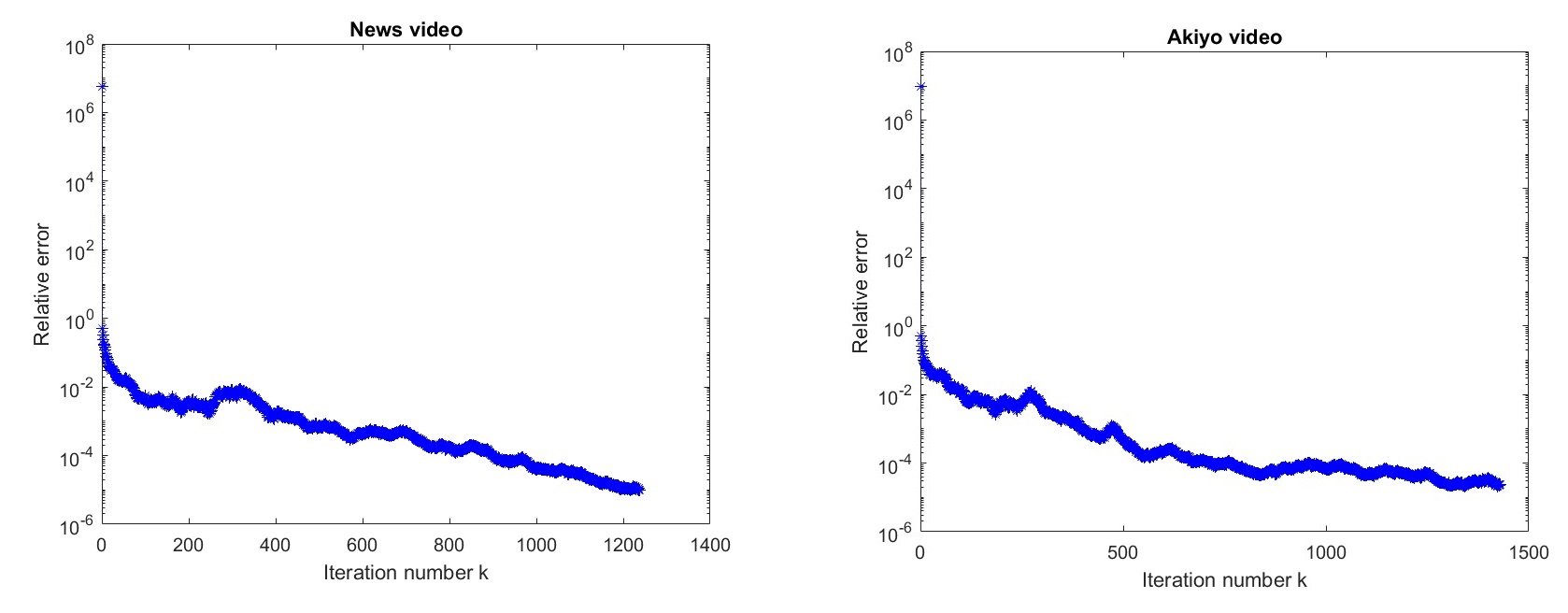}
	\caption{\small The relative error history of the proposed algorithm for two benchmark videos.}\label{Err_fig_video}
\end{figure}

\end{example}

\section{Conclusion}
In this paper, we presented iterative algorithms for solving tensor equations via the T-product. For each initial tensor, these algorithms provided a sequence $ \mathscr{X}_{k} $ converging to a solution or least-squares solution of related problems in a finite number of iteration steps with a negligible error. We presented numerical examples to support the theoretical results, which proved that the algorithms were practical and effective to solve some tensor equations.


\section{Conflict of Interest Statement}
 The authors declare that they have no
 conflict of interest with anything.





\end{document}